\documentclass[12pt]{article}
\usepackage{amsmath,amsthm,amssymb}
\usepackage{amssymb,latexsym}

\newtheorem{theorem}{Theorem}

\newtheorem{lemma}{Lemma}

\textheight=21.5cm
\textwidth=16cm
\hoffset=-1cm
\parindent=16pt

\begin{document}

\baselineskip=17pt

\title{\bf A ternary diophantine inequality by primes near to squares}

\author{\bf S. I. Dimitrov}
\date{2019}
\maketitle
\begin{abstract}
Let $c$ be fixed with $1<c<35/34$.
In this paper we prove that for every sufficiently large real number $N$
and a small constant $\varepsilon>0$, the diophantine inequality
\begin{equation*}
|p_1^c+p_2^c+p_3^c-N|<\varepsilon
\end{equation*}
is solvable in primes $p_1,\,p_2,\,p_3$ near to squares.\\
\quad\\
\textbf{Keywords}: Diophantine inequality; exponential sum; prime.\\
\quad\\
{\bf  2010 Math.\ Subject Classification}:  11P55 $\cdot$ 11J25
\end{abstract}

\section{Introduction and statement of the result}
\indent

In 1952 I. I. Piatetski-Shapiro \cite{Shapiro} investigated the inequality
\begin{equation}\label{Shapiro}
|p_1^c+p_2^c+\cdot\cdot\cdot+p_r^c-N|<\varepsilon
\end{equation}
where $c>1$ is not an integer, $\varepsilon$ is a fixed small positive number, and
$p_1,...,p_r$ are primes. He proved the existence of an $H(c)$, depending only on
$c$, such that for all sufficiently large real $N$, (\ref{Shapiro}) has a solution for $H(c)\leq r$.
He established that
\begin{equation*}
\limsup\limits_{c\rightarrow\infty}\frac{H(c)}{c\log c}\leq4
\end{equation*}
and also that $H(c)\leq5$ if $1<c<3/2$.

In 1992 Tolev \cite{Tolev2} showed that (\ref{Shapiro}) has a solution for $r=3$ and $1<c<15/14$.
The interval $1<c<15/14$ was subsequently improved by several authors
\cite{Baker-Weingartner}, \cite{Cai1}, \cite{Cai2}, \cite{Cai3}, \cite{Cao-Zhai}, \cite{Ku-Ne}, \cite{Kumchev}.
The best result up to now belongs to Cai \cite{Cai3} with $1<c<43/36$.

On the other hand in 1991 Tolev \cite{Tolev1} solved the diophantine inequality
\begin{equation*}
 |\lambda_1p_1+\lambda_2p_2+\lambda_3p_3+\eta|<\varepsilon
\end{equation*}
in primes $p_1,\,p_2,\,p_3$ near to squares.
Here $\eta$ is real, the constants $\lambda_1,\lambda_2,\lambda_3$
satisfy some necessary conditions and $\varepsilon>0$  is a small constant.

More precisely Tolev proved the following theorem
\begin{theorem}
Suppose that $\lambda_1,\lambda_2,\lambda_3$ are non-zero real numbers, not all of
the same sign, that $\eta$ is real, $\lambda_1/\lambda_2$ is irrational and $0<\tau<1/8$.
Then there exist infinitely many triples of primes $p_1,\,p_2,\,p_3$ such that
\begin{equation*}
 |\lambda_1p_1+\lambda_2p_2+\lambda_3p_3+\eta|<(\max p_j)^{-\tau}
\end{equation*}
and
\begin{equation*}
\|\sqrt{p_1}\|,\; \|\sqrt{p_2}\|,\; \|\sqrt{p_3}\|<(\max p_j)^{-(1-8\tau)/26}\log^5(\max p_j)
\end{equation*}
(as usual, $\|\alpha\|$ denotes the distance from $\alpha$ to the nearest integer).
\end{theorem}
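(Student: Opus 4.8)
The natural route is the Davenport--Heilbronn form of the circle method, with the conditions $\|\sqrt{p_j}\|<\Delta$ built in via Vaaler's trigonometric polynomials. Fix a large parameter $X$, localise the primes to the range $(X/2,\,X]$ (so $\max p_j\asymp X$, and, up to harmless constant factors, a triple meeting the two bounds obtained below also meets the requirements of the theorem), set $\delta=X^{-\tau}$ and $\Delta=X^{-(1-8\tau)/26}\log^{5}X$, and, with $e(x)=e^{2\pi ix}$, introduce
\begin{equation*}
\mathcal I=\int_{-\infty}^{\infty}S_1(\alpha)S_2(\alpha)S_3(\alpha)\,e(\eta\alpha)\,\Phi(\alpha)\,d\alpha,\qquad
S_j(\alpha)=\sum_{\substack{X/2<p\le X\\ \|\sqrt p\|<\Delta}}(\log p)\,e(\lambda_j p\alpha),
\end{equation*}
where $\Phi(\alpha)=\bigl(\tfrac{\sin\pi\delta\alpha}{\pi\alpha}\bigr)^{2}$ satisfies $\int_{-\infty}^{\infty}\Phi(\alpha)e(t\alpha)\,d\alpha=\max(0,\delta-|t|)$ and $\Phi(\alpha)\ll\min\bigl(\delta^{2},|\alpha|^{-2}\bigr)$. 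Then $\mathcal I$ equals $\sum(\log p_1)(\log p_2)(\log p_3)\max\bigl(0,\delta-|\lambda_1p_1+\lambda_2p_2+\lambda_3p_3+\eta|\bigr)$, the sum over primes $p_i\in(X/2,X]$ with $\|\sqrt{p_i}\|<\Delta$, so a lower bound $\mathcal I\gg\Delta^{3}X^{2-2\tau}$ yields such a triple, and letting $X\to\infty$ produces infinitely many. To handle the near-square condition I would replace $\mathbf 1_{\|\theta\|<\Delta}$ by Vaaler's majorant and minorant trigonometric polynomials of degree $H\asymp\Delta^{-1}\log X$, whose constant Fourier coefficient is $2\Delta+O(H^{-1})$ and whose other coefficients $c_h$ obey $|c_h|\ll\min\bigl(\Delta,|h|^{-1}\bigr)$; this rewrites $S_j(\alpha)$, up to an admissible error, as $2\Delta\sum_{p}(\log p)e(\lambda_jp\alpha)+\sum_{0<|h|\le H}c_h\sum_{p}(\log p)e(\lambda_jp\alpha+h\sqrt p)$, reducing everything to the prime exponential sums $\sum_p(\log p)e(\beta p+h\sqrt p)$, to be treated by Vaughan's identity, Weyl differencing and van der Corput's method.

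Next I would split $\mathbb R=\mathfrak M\cup\mathfrak m\cup\mathfrak t$ with $\mathfrak M=\{|\alpha|\le X^{-1+\kappa}\}$, $\mathfrak m=\{X^{-1+\kappa}<|\alpha|\le X^{A}\}$ and $\mathfrak t=\{|\alpha|>X^{A}\}$ for a small $\kappa\in(0,\tfrac12)$ and a large fixed $A$. On $\mathfrak M$ the main term comes only from $h=0$: the prime number theorem gives $\sum_{p}(\log p)e(\lambda_jp\alpha)=\int_{X/2}^{X}e(\lambda_j\alpha t)\,dt+O\bigl(Xe^{-c\sqrt{\log X}}\bigr)$ uniformly there, while for $h\ne0$ the phase $\lambda_jp\alpha+h\sqrt p$ has no stationary point on $[X/2,X]$ (such a point would require $h^{2}\asymp\lambda_j^{2}\alpha^{2}X$, impossible for integers $h\ge1$ when $|\alpha|\le X^{-1+\kappa}$) and derivative $\asymp|h|X^{-1/2}$ there, so Vaughan's identity together with van der Corput estimates bounds the total $h\ne0$ contribution to $S_j(\alpha)$ by $X^{3/4+\epsilon}\Delta^{-1/2}=o(\Delta X)$. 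Replacing $S_j$ by its $h=0$ main part, extending the integral from $\mathfrak M$ to $\mathbb R$ (the tails controlled by $\Phi(\alpha)\ll|\alpha|^{-2}$) and invoking the standard Davenport--Heilbronn lemma identifies the outcome with the $(\delta-|\cdot|)$-weighted volume of $\{(t_1,t_2,t_3)\in[X/2,X]^{3}:|\lambda_1t_1+\lambda_2t_2+\lambda_3t_3+\eta|<\delta\}$, which is $\gg\delta^{2}X^{2}$ precisely because $\lambda_1,\lambda_2,\lambda_3$ are not all of one sign; hence $\mathcal I_{\mathfrak M}\gg\Delta^{3}\delta^{2}X^{2}=\Delta^{3}X^{2-2\tau}$. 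On $\mathfrak t$ the decay $\Phi(\alpha)\ll|\alpha|^{-2}$ and the trivial bound $|S_j(\alpha)|\le X\log X$ make $\mathcal I_{\mathfrak t}$ negligible once $A$ is large.

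The minor arcs $\mathfrak m$ are where the irrationality of $\lambda_1/\lambda_2$ is used, and the reason the theorem asserts \emph{infinitely many} triples: choosing $X$ to run through a sequence $X_k\to\infty$ adapted to the continued-fraction expansion of $\lambda_1/\lambda_2$, one shows that for every $\alpha\in\mathfrak m$ at least one of $\lambda_1\alpha,\lambda_2\alpha$ admits no rational approximation with small denominator, whence a Vaughan-identity, Weyl and van der Corput argument---in which the twist $e(h\sqrt p)$ is a harmless slowly varying factor, since $|h|\le H=X^{o(1)}\Delta^{-1}$ and $\sum_{0<|h|\le H}|c_h|\ll\log X$---gives $\min\bigl(|S_1(\alpha)|,|S_2(\alpha)|\bigr)\ll X^{1-\rho}$ uniformly on $\mathfrak m$, for a fixed $\rho>0$. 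Combined with the mean-value bound $\int_{-\infty}^{\infty}|S_j(\alpha)|^{2}\Phi(\alpha)\,d\alpha\ll\delta\sum_{\|\sqrt p\|<\Delta}(\log p)^{2}\ll\delta\,\Delta X\log X$ (the off-diagonal terms dropping out because $\delta<1$), Cauchy--Schwarz---bounding one of the three factors pointwise and the product of the other two by the sum of their squares---gives $\mathcal I_{\mathfrak m}\ll X^{1-\rho}\,\delta\,\Delta X\log X$.

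The decisive step, which I expect to be the main obstacle, is to arrange $\mathcal I_{\mathfrak m}=o(\mathcal I_{\mathfrak M})$: one needs the available power saving $\rho$ on the minor arcs to outweigh the smallness of $\Delta$, i.e.\ roughly $X^{1-\rho}\log^{2}X\ll\Delta^{2}X^{1-\tau}$, and achieving a saving of the requisite strength demands a careful optimisation among the Vaaler degree $H\asymp\Delta^{-1}$, the length $X^{A}$ of the minor-arc range, and the van der Corput exponents controlling $\sum_p(\log p)e(\beta p+h\sqrt p)$ uniformly in $h$. It is precisely this optimisation that fixes the admissible parameters, and one finds that it goes through exactly when $0<\tau<1/8$ with $\Delta=X^{-(1-8\tau)/26}\log^{5}X$; by contrast the major-arc analysis, the trivial-region estimate and the Vaaler reduction are comparatively routine.
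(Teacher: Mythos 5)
Note first that the paper does not actually prove this statement: it is Tolev's 1991 theorem, quoted with the proof ``See \cite{Tolev1}''. Your outline follows essentially the same strategy as Tolev's original argument (and as the adaptation used for Theorem \ref{MyTheorem} in this paper): a Davenport--Heilbronn integral, a trigonometric approximation of the indicator of $\|\sqrt{p}\|<\Delta$ (you use Vaaler polynomials where Tolev and this paper use the smoothed $1$-periodic function of Lemma \ref{Periodicfunction}), and exponential sums $\sum_p(\log p)\,e(\lambda_j p\alpha+h\sqrt{p})$ handled by Vaughan's identity and van der Corput. So the framework is right, and the major-arc, trivial-region and mean-value parts of your sketch are sound (the diagonal-only evaluation of $\int|S_j|^2\Phi$ is fine since $\delta\to 0$ while $|\lambda_j|$ is fixed, granted an upper bound of the expected order for the number of primes with $\|\sqrt p\|<\Delta$, which itself needs the same exponential-sum input).

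The genuine gap is that the decisive step is asserted rather than proved, and the one remark you make about it is wrong in spirit. You write that on the minor arcs the twist $e(h\sqrt p)$ is ``a harmless slowly varying factor''; it is not. The entire technical content of the theorem --- and the sole source of the exponent $(1-8\tau)/26$ and of the constraint $\tau<1/8$ --- is a nontrivial bound for $\sum_{n}\Lambda(n)\,e(\beta n+h\sqrt n)$, uniform in $1\le|h|\le H\asymp\Delta^{-1}$, with an explicit power saving $\rho$; one cannot extract $e(h\sqrt p)$ as a benign weight, since for $h\ne 0$ it is exactly what destroys the classical Vinogradov minor-arc estimate and forces the type~I/type~II analysis with second- and third-derivative tests (the analogue of Lemma \ref{Valphaest} here, where the splitting into the cases $|\alpha|\lessgtr m X^{1/2-c}$ and the Weyl--van der Corput differencing in the bilinear sums is where all the work lies). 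Your balance condition $\mathcal I_{\mathfrak m}=o(\mathcal I_{\mathfrak M})$ requires roughly $\rho>\tau+(1-8\tau)/13$, so the theorem stands or falls with the exact numerical saving obtainable uniformly in $h$, together with the choice of $X$ along denominators of convergents of $\lambda_1/\lambda_2$; saying ``one finds that it goes through exactly when $0<\tau<1/8$ with $\Delta=X^{-(1-8\tau)/26}\log^5X$'' is assuming the conclusion. As it stands the proposal is a correct plan of attack, essentially Tolev's, but not a proof: the uniform minor-arc estimate and the optimisation producing $1/8$ and $1/26$ must be carried out, not postulated.
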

\begin{proof}
See \cite{Tolev1}.
\end{proof}

Motivated by these results and following the method of Tolev \cite{Tolev1}
we shall prove the following theorem
\begin{theorem}\label{MyTheorem} Let $c$ and $\tau$ be fixed with $1<c<\tau<35/34$
and $\delta>0$ be a fixed sufficiently small number.
Then for every sufficiently large real number $N$, the diophantine inequality
\begin{equation*}
|p_1^c+p_2^c+p_3^c-N|<N^{-\frac{1}{c}(\tau-c)}\log N
\end{equation*}
is solvable in primes $p_1,\,p_2,\,p_3$  such that
\begin{equation*}
\|\sqrt{p_1}\|,\; \|\sqrt{p_2}\|,\; \|\sqrt{p_3}\|<N^{-\frac{17}{48c}\big(\frac{35}{34}-\tau\big)+\delta}.
\end{equation*}
\end{theorem}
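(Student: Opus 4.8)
The plan is to follow the circle method adapted to diophantine inequalities, combining the Davenport–Heilbronn treatment of $|p_1^c+p_2^c+p_3^c-N|<\Delta$ with Tolev's device for restricting the primes to lie near squares. Introduce the weight $\Delta=N^{-\frac{1}{c}(\tau-c)}\log N$ and a smooth majorant/minorant pair for the interval $(-\Delta,\Delta)$ so that the count of solutions is controlled by
\begin{equation*}
\int_{-\infty}^{\infty} S_1(\alpha)S_2(\alpha)S_3(\alpha)\,K(\alpha)\,e(-\alpha N)\,d\alpha,
\end{equation*}
where $K$ is the Fourier transform of the smoothing kernel (decaying like $\min(\Delta,|\alpha|^{-1})$ essentially, with a further rapid decay past $|\alpha|>\Delta^{-1}\log N$) and each
\begin{equation*}
S_j(\alpha)=\sum_{\substack{x<p\le 2x\\ \|\sqrt{p}\|<\rho}} (\log p)\, e(\alpha p^c)
\end{equation*}
with $\rho=N^{-\frac{17}{48c}(\frac{35}{34}-\tau)+\delta}$ and $x\asymp N^{1/c}$. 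First I would split the real line into the major arc $|\alpha|\le \tau_0$ (a small fixed power of $N$ below $1$, or even $|\alpha|\le N^{-1+\kappa}$), the minor arcs $\tau_0<|\alpha|\le P$, and the trivial range $|\alpha|>P$ with $P$ chosen near $\Delta^{-1}$; on the trivial range the kernel's decay kills the contribution, and the major arc yields the expected main term.

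On the major arc the key input is an asymptotic formula for $S_j(\alpha)$ when $|\alpha|$ is small: one writes the condition $\|\sqrt p\|<\rho$ using a Fourier expansion (or a Vaughan-type identity combined with the explicit treatment of the square-root condition à la Tolev), producing a main term proportional to $\rho$ times the usual prime-counting density and an exponential integral in $\alpha$. Multiplying the three and integrating against $K(\alpha)e(-\alpha N)$ over the major arc gives a main term of order $\rho^3\Delta\, N^{3/c-1}(\log N)^{-\,\text{const}}$, which is positive and genuinely larger than all error terms provided the arcs are balanced correctly; this is where the numerology $1<c<\tau<35/34$ and the exponent $\frac{17}{48c}(\frac{35}{34}-\tau)$ enters. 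The singular series/integral is handled exactly as in Tolev \cite{Tolev1} and in the papers on \eqref{Shapiro}, so I would quote those computations rather than redo them.

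The heart of the argument, and the main obstacle, is the minor-arc estimate: one must show
\begin{equation*}
\int_{\tau_0<|\alpha|\le P} |S_1(\alpha)S_2(\alpha)S_3(\alpha)|\,|K(\alpha)|\,d\alpha = o\!\big(\rho^3\Delta\,N^{3/c-1}(\log N)^{-\,\text{const}}\big).
\end{equation*}
Because $|K(\alpha)|\ll \min(\Delta,|\alpha|^{-1})$, a dyadic decomposition over $|\alpha|\asymp T$ with $\tau_0<T\le P$ reduces this to bounding
\begin{equation*}
\Delta\!\!\int_{|\alpha|\asymp T}\! |S_1S_2S_3|\,d\alpha \quad\text{and}\quad T^{-1}\!\!\int_{|\alpha|\asymp T}\! |S_1S_2S_3|\,d\alpha,
\end{equation*}
and by Hölder it suffices to have a pointwise bound $\sup_{|\alpha|\asymp T}|S_j(\alpha)|$ together with an $L^2$-mean-value bound for the square-root-restricted exponential sum. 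The pointwise bound is where near-square restriction costs us: one opens $\|\sqrt p\|<\rho$, applies a Weyl/van der Corput or Kloosterman-type treatment to the resulting sums over $p$ with $p^c$ in the exponent, and the exponent pair machinery (this is the source of the fractions $17/48$ and $35/34$) gives a saving of the form $|S_j(\alpha)|\ll \rho\, x\log x \cdot x^{-\eta}$ for some $\eta>0$ uniformly on $\tau_0<|\alpha|\le P$, as long as $c$ is close enough to $1$. Combining this power saving in one factor with trivial or $L^2$ bounds for the other two, and summing the geometric series over $T$, closes the gap. I expect the delicate points to be: (i) making the van der Corput / exponent-pair estimate uniform across the whole minor-arc range while retaining the full factor $\rho$, since a lossy treatment of the square-root condition would eat the power saving; (ii) checking that the admissible range of $c$ produced by the exponent-pair optimization is exactly $c<\tau<35/34$; and (iii) verifying that the major-arc main term dominates after the smoothing kernel's $\Delta$ is accounted for, which is a routine but essential bookkeeping step. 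Throughout I would lean on the structure of Tolev \cite{Tolev1}, replacing his linear exponential sums $e(\lambda_j p)$ by the nonlinear $e(\alpha p^c)$ and importing the relevant estimates for $\sum e(\alpha p^c)$ from the literature on \eqref{Shapiro} (e.g. \cite{Tolev2}, \cite{Kumchev}, \cite{Cai3}).
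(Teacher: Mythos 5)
Your outline is a reasonable circle-method plan, but it is missing the actual technical core of the theorem, and in two places it relies on statements that are not justified and are precisely where the work lies. First, after opening the condition $\|\sqrt p\|<\rho$ by a (smoothed) Fourier expansion, everything hinges on nontrivial bounds for the twisted sums $U(\alpha,m)=\sum_{X/2<p\le X}e(\alpha p^c+m\sqrt p)\log p$ for \emph{all} $|\alpha|\le P$, including $\alpha$ arbitrarily close to $0$: on your ``major arc'' the asymptotic formula for the restricted sum $S_j(\alpha)$ is exactly the statement that the $m\neq0$ harmonics are negligible there, and near $\alpha=0$ the only available oscillation is $m\sqrt p$, so this is not covered by quoting minor-arc estimates for $\sum e(\alpha p^c)$ from \cite{Tolev2}, \cite{Kumchev}, \cite{Cai3}. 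Your sketch confines the ``Weyl/van der Corput'' treatment to the minor arcs and never addresses this. Moreover, bounding $U(\alpha,m)$ is delicate because the two phases interfere: $f''_{ll}$ is a difference of two terms of sizes $|\alpha|d^2X^{c-2}$ and $md^2X^{-3/2}$ which can cancel, forcing the case analysis according to whether $|\alpha|$ is below, comparable to, or above $mX^{1/2-c}$, with a second/third-derivative test (and Rolle-type subdivision) in the intermediate range, and a Vaughan decomposition plus the Weyl--van der Corput shift (Lemma \ref{Iwaniec-Kowalski}) with an optimized shift length $Q=[P^{-3/4}X^{(9-6c)/8}]$ for the type II sums. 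This is the content of Lemma \ref{Valphaest} (estimate \eqref{Ualphaestimation}), it is where the exponents $17/48$ and the barrier $35/34$ actually come from, and none of it is present in your proposal; attributing the numerology to generic ``exponent pair machinery'' does not substitute for it. Second, your minor-arc step asks for a pointwise saving ``retaining the full factor $\rho$'', i.e.\ $|S_j(\alpha)|\ll\rho X^{1-\eta}$; that is an assertion, not a consequence of anything you cite, and the bound the method really delivers for the $m\neq0$ part is of the shape \eqref{Valphaestimation}, which is compared against $YX^{2-\tau}$ (after inserting the $L^2$ bounds of Lemmas \ref{IntSalpha} and \ref{IntValpha}), not against $\rho X^{1-\eta}$; whether your stronger requirement holds in the whole range $1<c<\tau<35/34$ is exactly what you would have to check and is not obvious.

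Structurally, the paper also avoids the dissection you propose: it writes $H(\alpha)=\tfrac95 YS(\alpha)+V(\alpha)$ with $V$ as in \eqref{Valpha}, quotes the known unrestricted lower bound $\int S^3(\alpha)e(-N\alpha)\hat A(\varepsilon\alpha)\,d\alpha\gg X^{3-c}$ (Lemma \ref{Lowerbound}, from \cite{Baker-Harman}, \cite{Tolev2}) for the main term, and disposes of all cross terms with the single uniform bound $\max_{|\alpha|\le P}|V(\alpha)|$ together with mean-value estimates for $S$ and $V$. Your route (asymptotics for the restricted sums on a major arc, power-saving bounds on minor arcs) could in principle be organized into a proof, but only after supplying the same key estimate for $U(\alpha,m)$ uniformly in $|\alpha|\le P$ and $m\neq0$; as written, the proposal presupposes that estimate rather than proving it, so the argument has a genuine gap at its center.
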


\section{Notations and lemmas}
\indent

Let $N$ be a sufficiently large positive number.
By $\eta$ we denote an arbitrary small positive number, not the same in all appearances.
For positive $A$ and $B$ we write $A\asymp B$ instead of $A\ll B\ll A$.
As usual $\mu(n)$ is M\"{o}bius' function and $\tau(n)$
denotes the number of positive divisors of $n$.
The letter $p$  with or without subscript will always denote prime number.
We denote by $\Lambda(n)$ von Mangoldt's function.
Moreover $e(y)=e^{2\pi \imath y}$. As usual, $[y]$ denotes the integer part of $y$.
Let $c$ and $\tau$ be fixed with $1<c<\tau<35/34$.
By $\delta$ we denote an fixed sufficiently small positive number.\\
Denote
\begin{align}
\label{X}
&X=(N/2)^{1/c}\,;\\
\label{varepsilon}
&\varepsilon=X^{c-\tau}\,;\\
\label{r}
&r=[\log X]\,;\\
\label{Y}
&Y=X^{-\frac{17}{48}\big(\frac{35}{34}-\tau\big)+\delta}\,;\\
\label{Delta}
&\Delta=Y/5\,;\\
\label{M}
&M=\Delta^{-1}r\,;\\
\label{Salpha}
&S(\alpha)=\sum\limits_{X/2<p\leq X} e(\alpha p^c)\log p\,;\\
\label{Ualpha}
&U(\alpha, m)=\sum\limits_{X/2<p\leq X} e(\alpha p^c+m\sqrt{p})\log p.
\end{align}

\begin{lemma}\label{Periodicfunction} Let $r\in \mathbb{N}$.
There exists a function $\chi(t)$ which is $r$-times continuously differentiable and
1-periodic with a Fourier series of the form
\begin{equation}\label{Fourierseries}
\chi(t)=\frac{9}{5}Y+\sum\limits_{m=-\infty\atop{m\neq0}}^\infty g(m) e(mt),
\end{equation}
where
\begin{equation}\label{gmest}
|g(m)|\leq\min\bigg(\frac{1}{\pi|m|},\frac{1}{\pi |m|}
\bigg(\frac{r}{\pi |m|\Delta}\bigg)^r\bigg)
\end{equation}
and
\begin{equation}\label{chit}
\chi(t) =
  \begin{cases}
    1  \quad   \text{ if }  &\|t\|\leq Y-\Delta,\\
    0 \quad  \text{ if }  &\|t\|\geq Y,\\
     \text{between} &0 \, \text{ and }\, 1 \text{ for the other t }.
   \end{cases}
\end{equation}
\end{lemma}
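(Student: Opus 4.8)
The plan is to build $\chi$ by repeatedly mollifying the indicator function of a slightly contracted ``sausage'' about the integers. Since $Y=X^{-\frac{17}{48}(\frac{35}{34}-\tau)+\delta}\to0$ as $N\to\infty$, for every large $N$ we have $0<Y<\tfrac12$, so $\{t:\|t\|\le Y\}$ is a single arc of the circle $\mathbb{R}/\mathbb{Z}$, which we identify with $[-Y,Y]$. On $\mathbb{R}/\mathbb{Z}$ take two building blocks: let $B$ be the $1$-periodic extension of the indicator function of $\big[-(Y-\tfrac{\Delta}{2}),\,Y-\tfrac{\Delta}{2}\big]$, and let $\rho$ be the $1$-periodic extension of $\tfrac{r}{\Delta}\,\mathbf{1}_{[-\Delta/(2r),\,\Delta/(2r)]}$, normalised so that $\int_0^1\rho\,dt=1$. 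Define $\chi$ to be the convolution, on $\mathbb{R}/\mathbb{Z}$, of $B$ with $r$ copies of $\rho$.

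The three assertions of the lemma then follow from the product structure of the Fourier coefficients. A direct integration gives $\widehat{B}(0)=2Y-\Delta=\tfrac95Y$ (using $\Delta=Y/5$) and $|\widehat{B}(m)|\le\tfrac1{\pi|m|}$ for $m\neq0$, whereas $\widehat{\rho}(0)=1$ and $\widehat{\rho}(m)=\dfrac{\sin(\pi m\Delta/r)}{\pi m\Delta/r}$, so that $|\widehat{\rho}(m)|\le\min\!\big(1,\tfrac{r}{\pi|m|\Delta}\big)$. Since convolution multiplies Fourier coefficients, $\chi$ has constant term $\widehat{B}(0)\,\widehat{\rho}(0)^{\,r}=\tfrac95Y$, matching \eqref{Fourierseries}, and for $m\neq0$ its coefficient $g(m):=\widehat{B}(m)\,\widehat{\rho}(m)^{\,r}$ obeys $|g(m)|\le\tfrac1{\pi|m|}\min\!\big(1,(\tfrac{r}{\pi|m|\Delta})^{r}\big)$, which is exactly \eqref{gmest}.

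For the shape \eqref{chit}: as a convolution of nonnegative functions $\chi\ge0$; writing $\chi(t)=\int_0^1 B(t-s)\,\psi(s)\,ds$ with $\psi:=\rho*\cdots*\rho$ a probability density and $B\le1$ gives $\chi\le1$. The support of $\chi$ lies in the sum of the supports of its factors, an arc of radius $(Y-\tfrac{\Delta}{2})+r\cdot\tfrac{\Delta}{2r}=Y$, so $\chi(t)=0$ for $\|t\|\ge Y$; and if $\|t\|\le Y-\Delta$ then $B(t-s)=1$ for every $s$ in the support of $\psi$ (an arc of radius $\le\Delta/2$), so $\chi(t)=\int_0^1\psi=1$. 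Lastly, $\chi$ is a convolution of indicator functions of intervals, hence piecewise polynomial and therefore continuously differentiable of any prescribed order once enough mollifying factors are used; the single bookkeeping point is that $r$ copies of $\rho$ give $r-1$ continuous derivatives, so to reach the stated $r$ one inserts a couple more copies of $\rho$ (rescaling the half-widths so the supports still add to $Y$), a change immaterial to the application, which uses \eqref{gmest} only via $|g(m)|\ll|m|^{-1}$ for small $|m|$ and rapid decay for large $|m|$. I expect no real obstacle here: this is the classical Vinogradov-type smoothing, and the only care needed is to pin down the half-widths ($Y-\tfrac{\Delta}{2}$ for $B$, total mollifier width $\Delta$) so that the plateau, the support, the constant term $\tfrac95Y$, and the exponent $r$ in \eqref{gmest} all come out exactly as stated.
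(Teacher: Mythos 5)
Your construction is correct and is essentially the argument behind the paper's one-line proof, which merely cites Karatsuba (p.\ 14): the function there is exactly this Vinogradov-style $r$-fold convolution of a periodized indicator of half-width $Y-\Delta/2$ with box mollifiers of total half-width $\Delta/2$, and your verifications of the constant term $2Y-\Delta=\tfrac{9}{5}Y$, the coefficient bound \eqref{gmest}, and the shape \eqref{chit} are all accurate. The only blemish is the smoothness bookkeeping: inserting extra mollifier copies (rescaled so the half-widths still sum to $\Delta/2$) would replace the exponent $r$ in \eqref{gmest} by a larger one, which is not always majorized by the stated bound when $\pi|m|\Delta$ is of size about $r$; but since the paper never actually uses the differentiability of $\chi$ (only \eqref{Fourierseries}, \eqref{gmest} and \eqref{chit} enter the argument), keeping exactly $r$ copies with $C^{r-1}$ smoothness, or accepting the negligible change in the tail estimate, is harmless for the application.
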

\begin{proof}
See (\cite{Karatsuba}, p. 14).
\end{proof}
We also denote
\begin{align}
\label{Halpha}
&H(\alpha)=\sum\limits_{X/2<p\leq X} \chi(\sqrt{p}) e(\alpha p^c)\log p\,;\\
\label{Valpha}
&V(\alpha)=\sum\limits_{m=-\infty\atop{m\neq0}}^\infty g(m) U(\alpha, m).
\end{align}
Further we need the function $A(x)$ used by Baker and Harman \cite{Baker-Harman}.
It is continuous and integrable on the real line such that
\begin{equation}\label{Achi}
A(x)\leq\chi_{[-1, 1]}(x).
\end{equation}
Further, if we write
\begin{equation*}
\hat{A}(\alpha)=\int\limits_{-\infty}^\infty A(x) e(-\alpha x) dx,
\end{equation*}
then
\begin{equation*}
\hat{A}(\alpha)=0 \quad \text{ for } \quad  |\alpha|\geq\mu,
\end{equation*}
where $\mu$ is a constant. Therefore if
\begin{equation}\label{P}
P=\frac{\mu}{\varepsilon},
\end{equation}
then
\begin{equation}\label{hatA0}
\hat{A}(\varepsilon\alpha)=0  \quad \text{ for } \quad |\alpha|\geq P.
\end{equation}

\begin{lemma}\label{Lowerbound}
Let $1<c<15/14$. Then
\begin{equation}\label{lowerbound}
\int\limits_{-\infty}^\infty S^3(\alpha) e(-N\alpha) \hat{A}(\varepsilon\alpha)\, d\alpha\gg X^{3-c}\,.
\end{equation}
\end{lemma}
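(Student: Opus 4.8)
The plan is to apply the Davenport--Heilbronn circle method. Since $\hat A(\varepsilon\alpha)=0$ for $|\alpha|\ge P$, the integral runs over $[-P,P]$; I would split it at $\tau_0:=X^{-c}\log X$ into a ``major arc'' $|\alpha|\le\tau_0$, which should carry the main term of size $\asymp X^{3-c}$, and the complementary range $\tau_0<|\alpha|\le P$, which I expect to contribute $o(X^{3-c})$.

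On the major arc I would replace $S(\alpha)$ by $I(\alpha):=\int_{X/2}^{X}e(\alpha t^c)\,dt$. By the prime number theorem (with its classical error term) and partial summation, $S(\alpha)=I(\alpha)+O\big(X(1+|\alpha|X^c)\exp(-c_1\sqrt{\log X})\big)$, so $S(\alpha)=I(\alpha)+O\big(X\exp(-c_2\sqrt{\log X})\big)$ for $|\alpha|\le\tau_0$; since $|S|,|I|\ll X$ and $|\hat A|\ll1$, this lets one replace $S^3$ by $I^3$ with error $O\big(X^{3-c}\exp(-c_3\sqrt{\log X})\big)$. The first-derivative test gives $|I(\alpha)|\ll\min(X,|\alpha|^{-1}X^{1-c})$, hence $\int_{|\alpha|>\tau_0}|I(\alpha)|^3\,d\alpha\ll\tau_0^{-2}X^{3-3c}=X^{3-c}(\log X)^{-2}$, and the $\alpha$-integral may be completed to $\mathbb R$. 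By Fourier inversion (legitimate since $\hat A$ has compact support),
$$\int_{\mathbb R}I^3(\alpha)e(-N\alpha)\hat A(\varepsilon\alpha)\,d\alpha=\frac{1}{\varepsilon}\iiint_{(X/2,X]^3}A\!\left(\frac{t_1^c+t_2^c+t_3^c-N}{\varepsilon}\right)dt_1\,dt_2\,dt_3,$$
and a lower bound $\gg\varepsilon X^{3-c}$ for the right-hand side follows by a standard (if slightly delicate) computation: change variables to $s_i=t_i^c$, whose Jacobian weight is nearly constant because $c$ is close to $1$, integrate out $s_3$ linearly so the inner integral becomes an integral of $A(v)$ over an interval of length $\asymp X^{\tau}$, and use that the Baker--Harman minorant can be taken with $\hat A(0)>0$ and arbitrarily small negative part $\int_{\{A<0\}}|A|$. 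Hence the major arc contributes $\gg X^{3-c}$.

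For the range $\tau_0<|\alpha|\le P$ it suffices (as $|\hat A|\ll1$) to show $\int_{\tau_0<|\alpha|\le P}|S(\alpha)|^3\,d\alpha=o(X^{3-c})$, which I would do in dyadic blocks following Tolev \cite{Tolev2}. For $|\alpha|$ only slightly above $\tau_0$ the approximation above still gives $|S(\alpha)|\ll|\alpha|^{-1}X^{1-c}$, so $\int|S|^3\ll\tau_0^{-2}X^{3-3c}=o(X^{3-c})$ there. For larger $|\alpha|$, Vaughan's identity together with van der Corput's method (first- and second-derivative tests for the sums with phase $\alpha(mn)^c$) yields, for $1<c<15/14$, estimates of the shape $|S(\alpha)|\ll\big(|\alpha|^{-1/2}X^{1-c/2}+X^{1-\eta_1}\big)\log^{O(1)}X$ with a fixed $\eta_1>\tau-1$; inserting these into $\int|S|^3$ --- the $X^{1-\eta_1}$ term against the mean value $\int_{|\alpha|\le P}|S(\alpha)|^2\,d\alpha\ll XP\log X$ (whose diagonal dominates, the off-diagonal being smaller since $|p_1^c-p_2^c|\gg X^{c-1}$ for $p_1\neq p_2$) --- gives $o(X^{3-c})$, the binding inequality being $\eta_1>\tau-1$ (equivalently $\eta_1>1/34$, comfortable since $\tau<35/34$). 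Adding the pieces, the whole integral is $\gg X^{3-c}$.

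The main obstacle is the exponential-sum estimate of the last paragraph --- obtaining $|S(\alpha)|\ll X^{1-\eta_1}$ with a fixed $\eta_1>\tau-1$ throughout the relevant range: this is precisely where the hypothesis $1<c<15/14$ (comfortably implied by $c<\tau<35/34$) is used, and it rests on carefully balancing the Type I and Type II sums from Vaughan's identity against van der Corput / exponent-pair bounds. Everything else is routine.
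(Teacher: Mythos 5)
Your proposal is correct in outline and follows exactly the argument the paper itself invokes: the paper's ``proof'' of this lemma is only the remark that one argues as in Baker--Harman and Tolev \cite{Baker-Harman}, \cite{Tolev2}, i.e.\ the Davenport--Heilbronn splitting into a short major arc (where $S$ is replaced by the integral $I(\alpha)$ via the prime number theorem and the main term $\gg X^{3-c}$ comes out by Fourier inversion with the Baker--Harman kernel) plus minor arcs treated by Vaughan's identity and van der Corput bounds, which is precisely your plan. The only points to tighten are cosmetic: the triple integral (before multiplying by $\varepsilon^{-1}$) is $\gg\varepsilon X^{3-c}$ so the completed major-arc term is $\gg X^{3-c}$ as you conclude, and in the minor-arc range one should split at $|\alpha|\asymp X^{2\eta_1-c}$ so that the $|\alpha|^{-1/2}X^{1-c/2}$ term is disposed of by direct integration and only the $X^{1-\eta_1}$ term is played against $\int|S|^2$, exactly as in Tolev's treatment.
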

\begin{proof}
Arguing as in \cite{Baker-Harman} and \cite{Tolev2} we obtain the lower bound \eqref{lowerbound}.
\end{proof}

\begin{lemma}\label{Korput} (Van der Corput) Let $k \geq 2$, $K = 2^{k-1}$
and $f(x)$ be a real-valued function with $k$ continuous derivatives in $[a, b]$ such that
\begin{equation*}
|f^{(k)}(x)|\asymp\lambda, \mbox{ uniformly in } x\in[a,b].
\end{equation*}
Then
\begin{equation*}
\bigg|\sum_{a<n\le b}e(f(n))\bigg|
\ll(b-a)\lambda^{\frac{1}{2K-2}}+(b-a)^{1-\frac{2}{K}}\lambda^{-\frac{1}{2K-2}}.
\end{equation*}
\end{lemma}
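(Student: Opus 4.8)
The plan is to prove Lemma~\ref{Korput} by induction on $k$, using the Weyl--van der Corput inequality (the ``$A$-process'') to trade one derivative for a loss governed by an auxiliary integer parameter $H$, thereby reducing the case of order $k+1$ to the case of order $k$; the base case $k=2$ is the classical second-derivative test. Throughout put $L=b-a$, and assume $L\ge1$, the estimate being trivial when $L<1$.

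\emph{Base case $k=2$.} Here $K=2$ and the assertion reads $\bigl|\sum_{a<n\le b}e(f(n))\bigr|\ll L\lambda^{1/2}+\lambda^{-1/2}$. Since $f''$ has constant sign, $f'$ is monotone on $[a,b]$; decomposing $[a,b]$ into $O(1+L\lambda^{1/2})$ suitable subintervals and applying the first-derivative (Kuzmin--Landau) estimate on all but $O(1)$ of them --- each such piece contributing $O(\lambda^{-1/2})$ --- while bounding the remaining ones trivially by $O(1+\lambda^{-1/2})$, one obtains the claim. This is the classical second-derivative test and may equally be quoted directly from any standard account of van der Corput's method.

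\emph{Inductive step.} Assume the lemma for some $k\ge2$ with $K=2^{k-1}$, and let $f$ have $k+1$ continuous derivatives with $|f^{(k+1)}(x)|\asymp\lambda$ on $[a,b]$; set $K'=2^{k}=2K$. For an integer $H$ with $1\le H\le L$, the Weyl--van der Corput inequality --- obtained by writing $H\sum_{a<n\le b}e(f(n))=\sum_{n}\sum_{0\le j<H}e(f(n+j))$, applying Cauchy--Schwarz in $n$, and expanding the square --- gives
\begin{equation*}
\bigg|\sum_{a<n\le b}e(f(n))\bigg|^2\ll\frac{L^2}{H}+\frac{L}{H}\sum_{1\le h<H}\bigg|\sum_{n\in I_h}e\bigl(f(n+h)-f(n)\bigr)\bigg|,
\end{equation*}
where each $I_h$ is a subinterval of $(a,b]$ of length $<L$. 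For fixed $h\ge1$ write $g_h(x)=f(x+h)-f(x)$; since $f^{(k+1)}$ keeps a constant sign, the mean value theorem gives $g_h^{(k)}(x)=h\,f^{(k+1)}(\xi)$ for some $\xi$, so $|g_h^{(k)}(x)|\asymp h\lambda$ uniformly on $I_h$. Applying the induction hypothesis to $g_h$ (with $\lambda$ replaced by $h\lambda$) and summing over $h$ via $\sum_{1\le h<H}h^{\pm1/(2K-2)}\ll H^{1\pm1/(2K-2)}$ yields
\begin{equation*}
\bigg|\sum_{a<n\le b}e(f(n))\bigg|^2\ll\frac{L^2}{H}+L^2\lambda^{\frac1{2K-2}}H^{\frac1{2K-2}}+L^{2-\frac2K}\lambda^{-\frac1{2K-2}}H^{-\frac1{2K-2}}.
\end{equation*}
Now choose $H$ to be an integer with $H\asymp\min\bigl(L,\,1+\lambda^{-1/(2K-1)}\bigr)$; this balances the first two terms, and a direct computation shows the right-hand side is then $\ll L^2\lambda^{\frac1{2K-1}}+L^{2-\frac2K}\lambda^{-\frac1{2K-1}}$. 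Taking square roots and using $2(2K-1)=2K'-2$ and $2-2/K=2(1-2/K')$ gives
\begin{equation*}
\bigg|\sum_{a<n\le b}e(f(n))\bigg|\ll L\,\lambda^{\frac1{2K'-2}}+L^{1-\frac2{K'}}\lambda^{-\frac1{2K'-2}},
\end{equation*}
which is exactly the claim of the lemma with $k$ replaced by $k+1$. This completes the induction.

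The only substantive ingredients are the Weyl--van der Corput inequality and the base case, both entirely classical; everything else is bookkeeping. The part requiring the most care is tracking the exponents through the optimization --- verifying that the choice $H\asymp\lambda^{-1/(2K-1)}$ promotes the data $(K,\lambda)$ for order $k$ to the data $(K',\lambda)$ for order $k+1$ with the correct powers of $L$ and $\lambda$ --- and disposing of the degenerate ranges $\lambda\gtrsim1$ and $\lambda\lesssim L^{-(2K-1)}$, where the trivial estimate $\bigl|\sum e(f(n))\bigr|\le L$ is already absorbed by the right-hand side. I do not anticipate any real obstacle beyond keeping the indices straight.
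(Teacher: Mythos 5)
The paper offers no proof of this lemma at all---it simply cites Karatsuba (Ch.~1, Th.~5)---and your argument (induction on $k$ via the Weyl--van der Corput $A$-process, with the second-derivative test as base case and the choice $H\asymp\min(L,1+\lambda^{-1/(2K-1)})$, checking the degenerate ranges against the trivial bound) is exactly the standard proof of that cited theorem; the exponent bookkeeping in your inductive step is correct. One small slip in your base-case sketch: the relevant number of subintervals (following $f'$ past successive integers) is $O(1+L\lambda)$, not $O(1+L\lambda^{1/2})$; with the correct count the accounting $(1+L\lambda)(\lambda^{-1/2}+1)\ll L\lambda^{1/2}+\lambda^{-1/2}$ for $\lambda\le 1$ (the statement being trivial for $\lambda\gg1$) yields the claimed estimate, so this is a presentational inaccuracy rather than a gap.
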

\begin{proof}
See (\cite{Karatsuba}, Ch. 1, Th. 5).
\end{proof}

\begin{lemma}\label{Iwaniec-Kowalski}
For any complex numbers $a(n)$ we have
\begin{equation*}
\bigg|\sum_{a<n\le b}a(n)\bigg|^2
\leq\bigg(1+\frac{b-a}{Q}\bigg)\sum_{|q|\leq Q}\bigg(1-\frac{|q|}{Q}\bigg)
\sum_{a<n,\, n+q\leq b}a(n+q)\overline{a(n)},
\end{equation*}
where $Q$ is any positive integer.
\end{lemma}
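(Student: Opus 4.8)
The plan is to run Gallagher's device: inflate the sum by the factor $Q$ using overlapping windows of length $Q$, apply Cauchy--Schwarz, and then evaluate the resulting diagonal sum exactly. Put $S=\sum_{a<n\le b}a(n)$ and, for each $k\in\mathbb{Z}$, set $T_k=\sum_{a<n\le b,\ k<n\le k+Q}a(n)$. The first step is the identity $QS=\sum_{k\in\mathbb{Z}}T_k$: every integer $n$ with $a<n\le b$ lies in exactly $Q$ of the intervals $(k,k+Q]$, namely those with $n-Q\le k\le n-1$. Since $T_k$ can be nonzero only when $(k,k+Q]$ meets $(a,b]$, i.e.\ for $a-Q<k<b$, at most $b-a+Q$ values of $k$ occur, so Cauchy--Schwarz gives $Q^2|S|^2\le(b-a+Q)\sum_k|T_k|^2$, equivalently $|S|^2\le \frac1Q\big(1+\frac{b-a}{Q}\big)\sum_k|T_k|^2$.

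Next I would expand $\sum_k|T_k|^2$ by opening the square and swapping the order of summation, writing it as $\sum_{a<m,n\le b}a(m)\overline{a(n)}\,N(m,n)$, where $N(m,n)$ counts the integers $k$ with $m,n\in(k,k+Q]$, that is, $k$ in $[\max(m,n)-Q,\,\min(m,n))$. Hence $N(m,n)=Q-|m-n|$ when $|m-n|<Q$ and $N(m,n)=0$ otherwise. Putting $q=m-n$ then yields $\sum_k|T_k|^2=\sum_{|q|<Q}(Q-|q|)\sum_{a<n,\,n+q\le b}a(n+q)\overline{a(n)}=Q\sum_{|q|<Q}\big(1-\frac{|q|}{Q}\big)\sum_{a<n,\,n+q\le b}a(n+q)\overline{a(n)}$.

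Substituting this into the bound for $|S|^2$ cancels one power of $Q$ and produces exactly the stated inequality; the missing terms with $|q|=Q$ may be adjoined at no cost because their weight $1-|q|/Q$ vanishes, so $\sum_{|q|\le Q}$ agrees with $\sum_{|q|<Q}$. The only point that really calls for care is the lattice-point bookkeeping in the two counting steps (the $Q$-fold covering identity and the evaluation of $N(m,n)$), together with keeping the half-open endpoint conventions for $(a,b]$ consistent throughout; apart from that, the argument is nothing more than Cauchy--Schwarz and a change of variables.
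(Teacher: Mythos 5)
Your argument is correct, and it is in substance the classical van der Corput/Gallagher shift argument: inflate $S$ over the $Q$ overlapping windows $(k,k+Q]$, apply Cauchy--Schwarz in $k$, and evaluate the diagonal count $N(m,n)=\max(0,\,Q-|m-n|)$, which after setting $q=m-n$ gives exactly the stated bound (the terms $|q|=Q$ being harmless since their weight $1-|q|/Q$ vanishes). The paper itself offers no proof at all here --- it simply cites Lemma 8.17 of Iwaniec--Kowalski --- and your proof is essentially the proof given there, only parametrized by the window position $k$ rather than by the shift $q$; so you are supplying the omitted standard argument rather than diverging from the paper. One small point in the bookkeeping you yourself flag: justifying ``at most $b-a+Q$ values of $k$'' by the condition that $(k,k+Q]$ merely meets $(a,b]$ is not quite right when $b-a$ is not an integer, since the number of integers $k$ with $a-Q<k<b$ is $\lceil b\rceil-\lfloor a\rfloor-1+Q$, which can exceed $b-a+Q$ (e.g.\ $a=0.9$, $b=1.1$, $Q=1$). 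The fix is immediate: $T_k\neq0$ forces the window to contain an integer $n\in(a,b]$, so $k$ lies in $[n_{\min}-Q,\,n_{\max}-1]$ where $n_{\min},n_{\max}$ are the least and greatest integers in $(a,b]$, giving at most $n_{\max}-n_{\min}+Q\le b-a+Q$ admissible $k$; with that adjustment every step of your proof stands.
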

\begin{proof}
See (\cite{Iwaniec-Kowalski}, Lemma 8.17).
\end{proof}

\begin{lemma}\label{IntSalpha}
For the sum denoted by \eqref{Salpha} we have
\begin{equation*}
 \int\limits_{-P}^P|S(\alpha)|^2\,d\alpha\ll PX\log^3X.
\end{equation*}
\end{lemma}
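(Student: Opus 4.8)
The plan is to open the square and integrate term by term. With $S(\alpha)=\sum_{X/2<p\le X}e(\alpha p^{c})\log p$ as in \eqref{Salpha}, one has
\[
\int_{-P}^{P}|S(\alpha)|^{2}\,d\alpha
=\sum_{X/2<p_{1},p_{2}\le X}\log p_{1}\log p_{2}\int_{-P}^{P}e\big(\alpha(p_{1}^{c}-p_{2}^{c})\big)\,d\alpha .
\]
The inner integral equals $2P$ when $p_{1}=p_{2}$, and for $p_{1}\neq p_{2}$ it equals $\dfrac{\sin\!\big(2\pi P(p_{1}^{c}-p_{2}^{c})\big)}{\pi(p_{1}^{c}-p_{2}^{c})}$, so in all cases it is
\[
\le\min\!\bigg(2P,\frac{1}{\pi|p_{1}^{c}-p_{2}^{c}|}\bigg).
\]
Hence I would split the integral into a diagonal term $p_{1}=p_{2}$ and an off-diagonal term and bound the two separately.

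For the diagonal term, since $t\mapsto t^{c}$ is injective on $(0,\infty)$, $p_{1}^{c}=p_{2}^{c}$ forces $p_{1}=p_{2}$, and its contribution is
\[
2P\sum_{X/2<p\le X}\log^{2}p\le 2P\log X\sum_{p\le X}\log p\ll PX\log X ,
\]
already inside the claimed bound. For the off-diagonal term I would use the mean value theorem: for $X/2<p_{2}<p_{1}\le X$ one has $p_{1}^{c}-p_{2}^{c}=c\,\xi^{\,c-1}(p_{1}-p_{2})$ with $\xi\in(p_{2},p_{1})$, and since $c>1$ this gives $|p_{1}^{c}-p_{2}^{c}|\asymp X^{c-1}|p_{1}-p_{2}|$. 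Writing $h=p_{1}-p_{2}$ with $0<|h|<X/2$ and bounding the number of admissible $p_{2}$ trivially by $\ll X$, the off-diagonal contribution is
\[
\ll\log^{2}X\cdot X\sum_{0<|h|<X/2}\min\!\bigg(2P,\frac{X^{1-c}}{|h|}\bigg)
\ll X\log^{2}X\sum_{0<h<X/2}\frac{X^{1-c}}{h}
\ll X^{2-c}\log^{3}X ,
\]
where I used $X^{1-c}/|h|\le X^{1-c}\ll X^{\tau-c}\asymp P$, which holds because $\tau>1$.

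It then remains to absorb both pieces into $PX\log^{3}X$. Since $P=\mu\varepsilon^{-1}=\mu X^{\tau-c}$ and $\tau>1$, we have $X^{2-c}\ll X^{1+\tau-c}\asymp PX$, so the off-diagonal term is $\ll PX\log^{3}X$ (in fact $o(PX\log X)$), while the diagonal term is $\ll PX\log X$; adding the two proves the lemma. The computation is entirely routine; the only point that needs care is the interplay between the length $P=\mu\varepsilon^{-1}$ of the integration range and the minimal spacing $\asymp X^{c-1}|h|$ of the points $p^{c}$ — one must verify $P\gg X^{1-c}$ (equivalently $\tau>1$) so that the $\min$ is always realised by its second argument, and $\tau>1$ is likewise exactly what makes $X^{2-c}\ll PX$. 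Both are guaranteed by the hypothesis $1<c<\tau<35/34$, so I expect no genuine obstacle; note that the argument in fact delivers the sharper bound $\ll PX\log X$, but $\log^{3}X$ is all that is used later.
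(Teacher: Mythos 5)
Your proof is correct, and since the paper itself gives no argument here beyond citing Tolev (1992, Lemma 7), your computation — expanding $|S(\alpha)|^2$, bounding the inner integral by $\min\big(2P,\,|p_1^c-p_2^c|^{-1}\big)$, and using the spacing $|p_1^c-p_2^c|\asymp X^{c-1}|p_1-p_2|$ from the mean value theorem together with $P\asymp X^{\tau-c}$, $\tau>1$ — is exactly the standard argument behind that cited lemma. The only superfluous point is your remark that the minimum must be realised by its second argument: $\min(2P,x)\le x$ holds unconditionally, so no verification of $P\gg X^{1-c}$ is needed for the bound, and indeed your argument even yields the sharper $\ll PX\log X$.
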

\begin{proof}
See( \cite{Tolev2}, Lemma 7).
\end{proof}

\begin{lemma}\label{IntValpha}
For the sum denoted by \eqref{Valpha} we have
\begin{equation*}
 \int\limits_{-P}^P|V(\alpha)|^2\,d\alpha\ll PX\log^5X.
\end{equation*}
\end{lemma}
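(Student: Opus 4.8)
The plan is to reduce the mean value of $V(\alpha)$ to that of $S(\alpha)$ supplied by Lemma \ref{IntSalpha}. First I would truncate the series \eqref{Valpha}. From the trivial bound $|U(\alpha,m)|\le\sum_{X/2<p\le X}\log p\ll X$, the second estimate in \eqref{gmest}, and the relation $M\Delta=r$ which follows from \eqref{r}, \eqref{Delta} and \eqref{M}, the tail of \eqref{Valpha} satisfies
\begin{equation*}
\Big|\sum_{|m|>M}g(m)\,U(\alpha,m)\Big|\ll X\sum_{|m|>M}\frac{1}{|m|}\Big(\frac{M}{\pi|m|}\Big)^{r}\ll X\pi^{-r}\ll X^{\,1-\log\pi}=o(1),
\end{equation*}
since $r=[\log X]$ and $\log\pi>1$. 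Hence
\begin{equation*}
\int\limits_{-P}^{P}|V(\alpha)|^{2}\,d\alpha\ll\int\limits_{-P}^{P}\Big|\sum_{0<|m|\le M}g(m)\,U(\alpha,m)\Big|^{2}d\alpha+P.
\end{equation*}

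Next I would apply the Cauchy--Schwarz inequality in the $m$-sum, obtaining
\begin{equation*}
\Big|\sum_{0<|m|\le M}g(m)\,U(\alpha,m)\Big|^{2}\le\Big(\sum_{0<|m|\le M}|g(m)|\Big)\Big(\sum_{0<|m|\le M}|g(m)|\,|U(\alpha,m)|^{2}\Big).
\end{equation*}
By the first estimate in \eqref{gmest} the prefactor is $\ll\sum_{0<|m|\le M}\tfrac{1}{\pi|m|}\ll\log M\ll\log X$, because \eqref{Y}, \eqref{Delta} and \eqref{M} show that $M$ is bounded by a fixed power of $X$. Integrating over $[-P,P]$ and interchanging the (finite) summation with the integration then gives
\begin{equation*}
\int\limits_{-P}^{P}|V(\alpha)|^{2}\,d\alpha\ll\log X\sum_{0<|m|\le M}|g(m)|\int\limits_{-P}^{P}|U(\alpha,m)|^{2}\,d\alpha+P.
\end{equation*}

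It remains to establish the bound $\int_{-P}^{P}|U(\alpha,m)|^{2}\,d\alpha\ll PX\log^{3}X$ \emph{uniformly} in $m$. Expanding the square and integrating term by term,
\begin{equation*}
\int\limits_{-P}^{P}|U(\alpha,m)|^{2}\,d\alpha=\sum_{X/2<p_{1},\,p_{2}\le X}\log p_{1}\log p_{2}\,e\big(m(\sqrt{p_{1}}-\sqrt{p_{2}})\big)\int\limits_{-P}^{P}e\big(\alpha(p_{1}^{c}-p_{2}^{c})\big)\,d\alpha,
\end{equation*}
and since $|e(m(\sqrt{p_{1}}-\sqrt{p_{2}}))|=1$ this is, in absolute value, at most $\sum_{X/2<p_{1},\,p_{2}\le X}\log p_{1}\log p_{2}\,\big|\int_{-P}^{P}e(\alpha(p_{1}^{c}-p_{2}^{c}))\,d\alpha\big|$, which is precisely the quantity estimated in the proof of Lemma \ref{IntSalpha}: one bounds the inner integral by $\min\big(2P,\pi^{-1}|p_{1}^{c}-p_{2}^{c}|^{-1}\big)$, treats the diagonal $p_{1}=p_{2}$ (contributing $\ll PX\log X$) and the off-diagonal separately (using $|p_{1}^{c}-p_{2}^{c}|\asymp X^{c-1}|p_{1}-p_{2}|$ and $1<\tau$), and arrives at $\ll PX\log^{3}X$. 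The unimodular twist $e(m\sqrt{p})$ is transparent to this argument, so the bound is uniform in $m$. Inserting it and using $\sum_{0<|m|\le M}|g(m)|\ll\log X$ once more, we conclude
\begin{equation*}
\int\limits_{-P}^{P}|V(\alpha)|^{2}\,d\alpha\ll\log X\cdot\log X\cdot PX\log^{3}X+P\ll PX\log^{5}X,
\end{equation*}
as required. The only delicate point is the uniformity in $m$ of the $U$-mean value, and, as just noted, the twist being of modulus one reduces it directly to Lemma \ref{IntSalpha}; the two extra logarithms arise, one from the Cauchy--Schwarz prefactor $\sum|g(m)|\ll\log X$ and one from the same weight $\sum|g(m)|$ appearing inside the integral.
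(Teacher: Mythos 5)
Your proof is correct and rests on essentially the same ingredients as the paper's: the decay of $g(m)$ makes the $m$-summation cost only $\log^2X$, while the double sum over primes is bounded uniformly in $m$ exactly as in Lemma \ref{IntSalpha} (diagonal $\ll PX\log X$, off-diagonal via $\min\big(P,|p_1^c-p_2^c|^{-1}\big)$ together with $\tau>1$), the twist $e(m\sqrt{p})$ being harmless. The only difference is organizational: you truncate at $M$ and apply Cauchy--Schwarz in $m$ before integrating, whereas the paper expands $|V(\alpha)|^2$ bilinearly in $(m_1,m_2)$ and first reduces $\int_{-P}^{P}$ to $P\int_0^1$; your direct treatment of the integral over $[-P,P]$ is, if anything, slightly cleaner, since $V(\alpha)$ is not $1$-periodic in $\alpha$.
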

\begin{proof}
On the one hand
\begin{equation}\label{IntValphaest1}
\int\limits_{-P}^P|V(\alpha)|^2\,d\alpha\ll P\int\limits_0^1|V(\alpha)|^2\,d\alpha.
\end{equation}
On the other hand arguing as in (\cite{Tolev1}, Lemma 5), (\cite{Tolev2}, Lemma 7)
and using \eqref{r}, \eqref{Delta}, \eqref{M}, \eqref{gmest}  we obtain
\begin{align*}
&\int\limits_0^1|V(\alpha)|^2\,d\alpha=\\
&=\sum_{|m_1|,\,|m_2|>0}
g(m_1)\overline{g(m_2)}\\
&\times\sum\limits_{X/2<p_1, p_2\leq X}
e(m_1\sqrt{p_1}-m_2\sqrt{p_2})
\log p_1\log p_2\int\limits_0^1\alpha(p_1^c-p_2^c)\,d\alpha\\
&\ll\sum_{|m_1|,\,| m_2|>0}|g(m_1)|. |g(m_2)|
\sum\limits_{X/2<p_1, p_2\leq X}
\log p_1\log p_2\Bigg|\int\limits_0^1\alpha(p_1^c-p_2^c)\,d\alpha\Bigg|\\
&\ll X\log^3X
\sum_{|m_1|,\,|m_2|>0}|g(m_1)|. | g(m_2)|\\
&=X\log^3X\bigg( \sum_{|m|>0}|g(m)|^2
+\sum_{|m_1|,\,|m_2|<M}| g(m_1)|. |g(m_2)|\\
&\quad\quad\quad+\sum_{0<m_1\leq M,\,|m_2|>M}| g(m_1)| .|g(m_2)|
+\sum_{|m_1|,\,|m_2|>M}|g(m_1)| . |g(m_2)|\bigg)\\
\end{align*}

\begin{align}\label{IntValphaest2}
&\ll X\log^3X\bigg( \sum_{|m|>0}\frac{1}{m^2}
+\sum_{0<|m_1|,\,|m_2|<M}
\frac{1}{|m_1|.|m_2|}\nonumber\\
&\quad\quad\quad+\sum_{0<m_1\leq M,\,|m_2|>M}
\frac{1}{|m_1|}|g(m_2)|
+\sum_{|m_1|,\,|m_2|>M}|g(m_1)|.|g(m_2)|\bigg)\nonumber\\
&\ll X\log^3X\bigg( \log^2X+\bigg(\frac{r}{\pi M\Delta}\bigg)^r\log X
+\bigg(\frac{r}{\pi M\Delta}\bigg)^{2r}\bigg)\nonumber\\
&\ll X\log^3X\bigg( \log^2X+\frac{\log X}{X}
+\frac{1}{X^2}\bigg)\nonumber\\
&\ll X\log^5X.
\end{align}
From \eqref{IntValphaest1} and \eqref{IntValphaest2} it follows the assertion in the lemma.
\end{proof}

\begin{lemma}\label{Valphaest} For the sum denoted by \eqref{Valpha}  the upper bound
\begin{align}\label{Valphaestimation}
\max\limits_{|\alpha|\leq P} |V(\alpha)|\ll
&\Big( M^{1/2}X^{7/12} + M^{1/6} X^{3/4}+X^{11/12}+P^{1/16}X^{\frac{2c+29}{32}}\nonumber\\
&+P^{-3/16}M^{1/4}X^{\frac{33-6c}{32}}+P^{-1/16}M^{1/12}X^{\frac{31-2c}{32}}\Big)X^\eta
\end{align}
holds.
\end{lemma}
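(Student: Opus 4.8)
The plan is to truncate the series \eqref{Valpha} using the rapid decay of the coefficients $g(m)$, and then to bound the exponential sums $U(\alpha,m)$ uniformly in $\alpha$ and $m$ by means of Vaughan's identity together with van der Corput's method.

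First I would split the sum over $m$ in \eqref{Valpha} at $|m|=M$. For $|m|>M$ the bound \eqref{gmest} gives $|g(m)|\le\frac{1}{\pi|m|}\big(\frac{r}{\pi|m|\Delta}\big)^{r}$, and since $M\Delta=r$ by \eqref{M} and $r=[\log X]$ by \eqref{r}, the trivial estimate $|U(\alpha,m)|\ll X\log X$ shows that this tail contributes $O(1)$ (exactly as in the proof of Lemma \ref{IntValpha}). Using $|g(m)|\le\frac{1}{\pi|m|}$ for the surviving terms, we get
\[
\max_{|\alpha|\le P}|V(\alpha)|\ll\sum_{0<|m|\le M}\frac{1}{|m|}\,\max_{|\alpha|\le P}|U(\alpha,m)|+1 .
\]
Since $\sum_{0<|m|\le M}|m|^{\theta-1}\ll M^{\theta}X^{\eta}$ for every fixed $\theta\ge0$, it is enough to prove, uniformly for $|\alpha|\le P$ and $1\le|m|\le M$, a bound for $|U(\alpha,m)|$ whose terms are $X^{\eta}$ times
\[
|m|^{1/2}X^{7/12},\ \ |m|^{1/6}X^{3/4},\ \ X^{11/12},\ \ P^{1/16}X^{\frac{2c+29}{32}},\ \ P^{-3/16}|m|^{1/4}X^{\frac{33-6c}{32}},\ \ P^{-1/16}|m|^{1/12}X^{\frac{31-2c}{32}},
\]
because summing these against $\sum_{0<|m|\le M}|m|^{-1}$ reproduces \eqref{Valphaestimation}.

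To estimate $U(\alpha,m)$ for fixed $\alpha$ and $m$, I would first replace $\log p$ by the von Mangoldt function (error $O(X^{1/2}\log X)$) and apply Vaughan's identity, or Heath-Brown's identity, to $\sum_{X/2<n\le X}\Lambda(n)e(f(n))$ with the two-term phase $f(t)=\alpha t^{c}+m\sqrt t$. This splits the sum into $O(\log^{2}X)$ Type I sums $\sum_{d\le D}c_{d}\sum_{X/(2d)<k\le X/d}e(f(dk))$ with $|c_{d}|\ll d^{\eta}$, and Type II sums $\sum_{D'<d\le 2D'}\sum_{k}a_{d}b_{k}e(f(dk))$ with $D'$ an intermediate power of $X$. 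For the Type I sums, the inner sum over $k$ has a smooth phase whose $j$-th derivative, for $d$ in a dyadic range $d\asymp D$, is $\asymp|\alpha|D^{j}X^{c-j}+|m|D^{j-1/2}X^{1/2-j}$; here the second term dominates because $P$ is only a small power of $X$, and Lemma \ref{Korput} with derivative orders $2$ and $3$, followed by summation over $d$, produces the $P$-free terms $|m|^{1/2}X^{7/12}$, $|m|^{1/6}X^{3/4}$ and $X^{11/12}$ (and Type II, when $|\alpha|$ is very small, yields nothing worse). For the Type II sums I would apply Cauchy's inequality and then Weyl differencing through Lemma \ref{Iwaniec-Kowalski}, reducing matters to exponential sums over $d$ with phase $\alpha(k_{1}^{c}-k_{2}^{c})d^{c}+m(\sqrt{k_{1}}-\sqrt{k_{2}})\sqrt d$; a further application of Lemma \ref{Korput}, after optimising $D'$ and the differencing length, gives the three terms containing $P$, the $c$-dependence of their $X$-exponents coming from the derivatives of $\alpha t^{c}$ and the powers of $P$ from the subranges of $\alpha$ in which that part of the phase is large enough to be exploited.

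The main obstacle is the bookkeeping forced by the two-term phase $\alpha t^{c}+m\sqrt t$: at every stage one must decide which of the two summands controls the derivative being estimated, and this depends on the size of $|\alpha|$ relative to $P$ and of $|m|$ relative to $M$, so the argument branches into several cases. In each case the order in Lemma \ref{Korput}, the Vaughan cut-off $D$, the Type II lengths $D'$, and the parameter $Q$ in Lemma \ref{Iwaniec-Kowalski} must be chosen so that the Type I and Type II contributions balance, and \eqref{Valphaestimation} is the envelope of all these bounds. Keeping every intermediate estimate uniform in $\alpha$ and $m$, and absorbing all the logarithmic losses into the factor $X^{\eta}$, is the technical core of the argument.
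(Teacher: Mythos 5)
Your overall skeleton (truncate the $m$-sum at $M$ using \eqref{gmest} and $M\Delta=r$, reduce to a bound for $\max_{|\alpha|\le P}|U(\alpha,m)|$ uniform in $1\le|m|\le M$, apply Vaughan's identity, treat Type~I sums by Lemma \ref{Korput} with $k=2,3$ and Type~II sums by Cauchy plus Lemma \ref{Iwaniec-Kowalski} with an optimized shift length $Q$) is exactly the route the paper takes. But there is a genuine gap at the technical heart of the argument: your handling of the two-term phase. You assert that the $j$-th derivative of the inner phase is $\asymp|\alpha|D^{j}X^{c-j}+|m|D^{j-1/2}X^{1/2-j}$ and that ``the second term dominates because $P$ is only a small power of $X$.'' Both claims are false. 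First, $P=\mu\varepsilon^{-1}\asymp X^{\tau-c}$ while $mX^{1/2-c}\le MX^{1/2-c}$ has a strictly smaller exponent, so the range $|\alpha|\gg mX^{1/2-c}$, where the $\alpha t^{c}$ part dominates, is nonempty for $|\alpha|\le P$ and must be treated separately (in the paper it produces the harmless term $P^{1/2}X^{(3c+2)/6}$ in \eqref{U2est1}). Second, and more seriously, the two contributions to $f^{\prime\prime}_{ll}$ have opposite signs (see \eqref{f''}--\eqref{gamma12}), so in the transition range $|\alpha|\asymp mX^{1/2-c}$ they can nearly cancel and the second derivative may vanish on part of the summation interval; the ``$\asymp$ sum of the two magnitudes'' claim fails exactly there, and Lemma \ref{Korput} cannot be applied with $k=2$ because its hypothesis $|f^{(k)}|\asymp\lambda$ is not met. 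Your plan never says how to get past this, and deciding ``which of the two summands controls the derivative'' is not possible in that range.

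The paper's device, which your proposal is missing, is the identity $(c-2)f^{\prime\prime}_{ll}-lf^{\prime\prime\prime}_{lll}=\frac{1-2c}{8}d^{2}(dl)^{-3/2}m$ (and its analogue \eqref{g''g'''} for the differenced Type~II phase $g_{d,q}$): since this combination has exact order $md^{2}X^{-3/2}$, at every point at least one of $|f^{\prime\prime}_{ll}|\gg md^{2}X^{-3/2}$, $|f^{\prime\prime\prime}_{lll}|\gg md^{3}X^{-5/2}$ holds, and a Rolle-type counting argument splits the range of $l$ into at most five subintervals on each of which either the second or the third derivative has exact order, so van der Corput with $k=2$ or $k=3$ applies there. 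This is precisely what produces the terms $m^{1/2}X^{7/12}$, $m^{1/6}X^{3/4}$, $m^{-1/6}X^{11/12}$ (Type~I) and $m^{1/12}X^{7/8}Q^{1/12}$, $m^{-1/12}XQ^{-1/12}$ (Type~II) that you list as targets; without it (or an equivalent treatment of the cancellation regime) those bounds are not justified and the envelope \eqref{Valphaestimation} does not follow. The remaining steps you describe (choice $Q=[P^{-3/4}X^{(9-6c)/8}]$ subject to $Q\le D$, and summing $|m|^{-1}$ against powers $|m|^{\theta}$ to convert $m$ into $M$) are fine and match the paper.
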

\begin{proof}
Bearing in mind \eqref{r}, \eqref{Delta}, \eqref{M}, \eqref{Ualpha}, \eqref{gmest}
and \eqref{Valpha} we write
\begin{align}\label{Valphaest1}
|V(\alpha)|&\ll\sum_{0<|m|\leq M}\frac{1}{|m|}|U(\alpha, m)|
+X\sum_{|m|>M}|g(m)|\nonumber\\
&\ll\sum_{0<|m|\leq M}\frac{1}{|m|}|U(\alpha, m)| +\bigg(\frac{r}{\pi M\Delta}\bigg)^rX\nonumber\\
&\ll \sum_{0<|m|\leq M}\frac{1}{|m|}|U(\alpha, m)|+1.
\end{align}
In order to prove the lemma we have to find the upper bound of the sum $U(\alpha, m)$ denoted by \eqref{Ualpha}.
Our argument is a modification of Petrov's and Tolev's  \cite{Petrov-Tolev} argument.

Assume that $m>0$. For $m<0$ the proof is analogous.

We denote
\begin{equation}\label{psit}
\psi(t)=\alpha t^c+m\sqrt{t}.
\end{equation}
\begin{equation}\label{fdl}
f(d,l)=\psi(dl)=\alpha (dl)^c+m\sqrt{dl}.
\end{equation}
It is clear that
\begin{equation*}
U(\alpha, m)=\sum\limits_{X/2<n\leq X}\Lambda(n)e(\alpha n^c+m\sqrt{n})
+\mathcal{O}( X^{1/2}).
\end{equation*}
Using Vaughan's identity (see \cite{Vaughan}) we get
\begin{equation}\label{Ualphadecomp}
U(\alpha, m)=U_1-U_2-U_3-U_4+\mathcal{O}( X^{1/2}),
\end{equation}
where
\begin{align}
\label{U1}
&U_1=\sum_{d\le X^{1/3}}\mu(d)\sum_{X/2d<l\le X/d}(\log l)e(f(d,l)),\\
\label{U2}
&U_2=\sum_{d\le X^{1/3}}c(d)\sum_{X/2d<l\le X/d}e(f(d,l)),\\
\label{U3}
&U_3=\sum_{X^{1/3}<d\le X^{2/3}}c(d)\sum_{X/2d<l\le X/d}e(f(d,l)),\\
\label{U4}
&U_4= \mathop{\sum\sum}_{\substack{X/2<dl\le X \\d>X^{1/3},\,l>X^{1/3} }}
a(d)\Lambda(l) e(f(d,l)),
\end{align}
and where
\begin{equation}\label{cdad}
|c(d)|\leq\log d,\quad  | a(d)|\leq\tau(d).
\end{equation}

\textbf{Estimation of $U_1$ and $U_2$}

Consider first $U_2$ defined by \eqref{U2}.  Bearing in mind  \eqref{fdl} we find
\begin{equation}\label{f''}
f^{\prime\prime}_{ll}(d,l)=\gamma_1-\gamma_2,
\end{equation}
where
\begin{equation}\label{gamma12}
\gamma_1=d^2\alpha c(c-1)(dl)^{c-2}, \quad \gamma_2=\frac{1}{4}md^2(dl)^{-3/2}.
\end{equation}
From \eqref{gamma12} and the restriction
\begin{equation}\label{restriction}
X/2<dl\le X
\end{equation}
we obtain
\begin{equation}\label{gamma12est}
|\gamma_1|\asymp |\alpha| d^2X^{c-2},
\quad |\gamma_2|\asymp md^2X^{-3/2}.
\end{equation}
On the one hand from  \eqref{f''} and \eqref{gamma12est} we conclude
that there exists sufficiently small constant $h_0>0$ such that if
$|\alpha|\leq h_0mX^{1/2-c}$, then $|f^{\prime\prime}_{ll}(d,l)|\asymp md^2X^{-3/2}$.

On the other hand from  \eqref{f''} and \eqref{gamma12est} it follows that
there exists sufficiently large constant $H_0>0$ such that if
$|\alpha|\geq H_0mX^{1/2-c}$, then $|f^{\prime\prime}_{ll}(d,l)|\asymp |\alpha| d^2X^{c-2}$.

Consider several cases.

\textbf{Case 1a.}
\begin{equation}\label{Case1a}
H_0mX^{1/2-c}\leq|\alpha|\leq P.
\end{equation}
We remind that in this case $|f^{\prime\prime}_{ll}(d,l)|\asymp |\alpha| d^2X^{c-2}$
and using Lemma \ref{Korput} for $k=2$ we get
\begin{align}\label{Case1aest}
\sum\limits_{X/2d<l\le X/d}e(f(d, l))
 &\ll\frac{X}{d}\big(|\alpha|d^2X^{c-2}\big)^{1/2}+\big(|\alpha|d^2X^{c-2}\big)^{-1/2}\nonumber\\
&= |\alpha|^{1/2} X^{c/2}+|\alpha|^{-1/2} d^{-1}X^{1-c/2}.
\end{align}
From \eqref{U2}, \eqref{cdad}, \eqref{Case1a} and \eqref{Case1aest} it follows
\begin{equation}\label{U2est1}
U_2\ll \big(P^{1/2} X^{\frac{3c+2}{6}}+m^{-1/2}X^{3/4}\big)\log^2X.
\end{equation}

\textbf{Case 2a.}
\begin{equation}\label{Case2a}
h_0mX^{1/2-c}<\alpha< H_0mX^{1/2-c}.
\end{equation}
By \eqref{fdl} we find
\begin{equation}\label{f'''}
f^{\prime\prime\prime}_{lll}(d,l)=d^3\alpha c(c-1)(c-2)(dl)^{c-3}
+\frac{3}{8}d^3m(dl)^{-5/2}.
\end{equation}
The formulas \eqref{f''},  \eqref{gamma12} and  \eqref{f'''} give us
\begin{equation}\label{f''f'''}
(c-2)f^{\prime\prime}_{ll}(d,l)-lf^{\prime\prime\prime}_{lll}(d,l)
=\frac{1-2c}{8}d^2(dl)^{-3/2}m.
\end{equation}
From \eqref{restriction} and \eqref{f''f'''} we obtain
\begin{equation*}
|(c-2)f^{\prime\prime}_{ll}(d,l)-lf^{\prime\prime\prime}_{lll}(d,l)|
\asymp md^2X^{-3/2}.
\end{equation*}
The above implies that there exists $\alpha_0 > 0$, such that for every
$l\in(X/2d, X/d]$ at least one of the following inequalities is fulfilled:
\begin{equation}\label{f''est1}
|f^{\prime\prime}_{ll}(d,l)|\geq \alpha_0 md^2X^{-3/2}.
\end{equation}
\begin{equation}\label{f'''est1}
|f^{\prime\prime\prime}_{lll}(d,l)|\geq \alpha_0 md^3X^{-5/2}.
\end{equation}
Let us consider the equation
\begin{equation}\label{f'''equation1}
f^{\prime\prime\prime}_{lll}(d,l)=0.
\end{equation}
From \eqref{f'''} it is tantamount to
\begin{equation}\label{f'''equation2}
3m(dl)^{1/2-c}-8\alpha c(c-1)(c-2)=0.
\end{equation}
It is easy to see that the equation \eqref{f'''equation2}
has at most 1 solution $Z\in(X^{1/2-c}, (X/2)^{1/2-c}]$.
Consequently the equation \eqref{f'''equation1}
has at most 1 solution in real numbers $l\in(X/2d, X/d]$.
According to Rolle's Theorem if $C$ does not depend on $l$  then the equation $f^{\prime\prime}_{ll}(d,l)=C$ has at most 2 solution in real numbers $l\in(X/2d, X/d]$.
Therefore the equation  $|f^{\prime\prime}_{ll}(d,l)|=\alpha_0 md^2X^{-3/2}$
has at most 4 solution in real numbers $l\in(X/2d, X/d]$.
From these consideration it follows that the interval $(X/2d, X/d]$ can be divided into at most 5 intervals such that if $J$ is one of them, then at least one of the following assertions holds:
\begin{equation}\label{f''estJ}
\mbox{The inequality } \eqref{f''est1}  \mbox{ is fulfilled for all } l\in J.
\end{equation}
\begin{equation}\label{f'''estJ}
\mbox{The inequality } \eqref{f'''est1}  \mbox{ is fulfilled for all } l\in J.
\end{equation}
On the other hand from \eqref{f''}, \eqref{restriction}, \eqref{gamma12est}, \eqref{Case2a} and \eqref{f'''} we get
\begin{equation}\label{f''f'''upper}
|f^{\prime\prime}_{ll}(d,l)|\ll md^2X^{-3/2}, \quad
|f^{\prime\prime\prime}_{lll}(d,l)|\ll md^3X^{-5/2}.
\end{equation}
Bearing in mind \eqref{f''estJ} -- \eqref{f''f'''upper} we conclude that
the interval $(X/2d, X/d]$ can be divided into at most 5 intervals such that if
$J$ is one of them, then at least one of the following statements  is fulfilled:
\begin{equation}\label{f''est2}
|f^{\prime\prime}_{ll}(d,l)|\asymp md^2X^{-3/2} \quad \mbox{ uniformly for } \quad l\in J.
\end{equation}
\begin{equation}\label{f'''est2}
|f^{\prime\prime\prime}_{lll}(d,l)|\asymp md^3X^{-5/2} \quad  \mbox{ uniformly for } \quad l\in J.
\end{equation}
If \eqref{f''est2} holds, then we use Lemma \ref{Korput} for $k=2$ and obtain
\begin{align}\label{Case2aest1}
\sum\limits_{l\in J}e(f(d, l))
 &\ll\frac{X}{d}\big(md^2X^{-3/2}\big)^{1/2}+\big(md^2X^{-3/2}\big)^{-1/2}\nonumber\\
&\ll m^{1/2} X^{1/4}+m^{-1/2} d^{-1}X^{3/4}.
\end{align}
If \eqref{f'''est2}  is fulfilled, then we use Lemma \ref{Korput} for $k=3$ and find
\begin{align}\label{Case2aest2}
\sum\limits_{l\in J}e(f(d, l))
 &\ll\frac{X}{d}\big(md^3X^{-5/2}\big)^{1/6}+\bigg(\frac{X}{d}\bigg)^{1/2}
 \big(md^3X^{-5/2} \big)^{-1/6}\nonumber\\
&=m^{1/6} d^{-1/2}X^{7/12}+m^{-1/6} d^{-1}X^{11/12}.
\end{align}
From \eqref{Case2aest1} and \eqref{Case2aest2} it follows
\begin{align}\label{Case2aest3}
\sum\limits_{X/2d<l\le X/d}e(f(d, l))&\ll m^{1/2} X^{1/4}+m^{-1/2} d^{-1}X^{3/4}\nonumber\\
&+m^{1/6} d^{-1/2}X^{7/12}+m^{-1/6} d^{-1}X^{11/12}.
\end{align}
Bearing in mind \eqref{U2} and \eqref{Case2aest3} we get
\begin{equation}\label{U2est2}
U_2\ll \big(m^{1/2}X^{7/12}+m^{1/6} X^{3/4}+m^{-1/6}X^{11/12}\big)\log^2X.
\end{equation}

\textbf{Case 3a.}
\begin{equation}\label{Case3a}
|\alpha|\leq h_0mX^{1/2-c}.
\end{equation}
We recall that  in this case $|f^{\prime\prime}_{ll}(d,l)|\asymp md^2X^{-3/2}$
and using Lemma \ref{Korput} for $k=2$ we obtain
\begin{equation}\label{Case3aest}
\sum\limits_{X/2d<l\le X/d}e(f(d, l))\ll m^{1/2} X^{1/4}+m^{-1/2} d^{-1}X^{3/4}.
\end{equation}
Using \eqref{U2} and \eqref{Case3aest} we find
\begin{equation}\label{U2est3}
U_2\ll \big(m^{1/2}X^{7/12}+m^{-1/2}X^{3/4}\big)\log^2X.
\end{equation}

\textbf{Case 4a.}
\begin{equation}\label{Case4a}
-H_0mX^{1/2-c}<\alpha< -h_0mX^{1/2-c}.
\end{equation}
In this case again $|f^{\prime\prime}_{ll}(d,l)|\asymp md^2X^{-3/2}$.
Consequently
\begin{equation}\label{U2est4}
U_2\ll \big(m^{1/2}X^{7/12}+m^{-1/2}X^{3/4}\big)\log^2X.
\end{equation}
From \eqref{U2est1}, \eqref{U2est2}, \eqref{U2est3} and \eqref{U2est4}
it follows
\begin{equation}\label{U2estimation}
U_2\ll \big(m^{1/2}X^{7/12}+m^{1/6} X^{3/4}+m^{-1/6}X^{11/12}+P^{1/2} X^{\frac{3c+2}{6}}\big)\log^2X.
\end{equation}
In order to estimate $U_1$ defined by \eqref{U1}
we apply Abel's transformation.
Then arguing as in the estimation of $U_2$  we get
\begin{equation}\label{U1estimation}
U_1\ll \big(m^{1/2}X^{7/12}+m^{1/6} X^{3/4}+m^{-1/6}X^{11/12}+P^{1/2} X^{\frac{3c+2}{6}}\big)\log^2X.
\end{equation}

\textbf{Estimation of $U_3$ and $U_4$}

Consider first $U_4$ defined by \eqref{U4}. We have
\begin{equation}\label{U4U5}
U_4\ll|U_5|\log X,
\end{equation}
where
\begin{equation}\label{U5}
U_5=\sum_{L<d\le 2L}b(l)\sum_{D<d\le 2D\atop{X/2l<d\leq X/l}}a(d)e(f(d,l))
\end{equation}
and where
\begin{equation}\label{ParU5}
a(d)\ll X^\eta, \quad b(l)\ll X^\eta, \quad
X^{1/3}\ll D\ll X^{1/2}\ll L\ll X^{2/3}, \quad  DL\asymp X.
\end{equation}
Using \eqref{U5}, \eqref{ParU5} and Cauchy's inequality we obtain
\begin{equation}\label{U52est1}
|U_5|^2\ll X^\eta L  \sum_{L<d\le 2L}\bigg|\sum_{D_1<d\le D_2}a(d)e(f(d,l))\bigg|^2,
\end{equation}
where
\begin{equation}\label{maxmin1}
D_1=\max{\bigg\{D,\frac{X}{2l}\bigg\}},\quad
D_2=\min{\bigg\{\frac{X}{l},2D\bigg\}}\,.
\end{equation}
Now from \eqref{ParU5} -- \eqref{maxmin1}  and Lemma \ref{Iwaniec-Kowalski} with $Q$ such that
\begin{equation}\label{QD}
Q\leq D
\end{equation}
we find
\begin{align}\label{U52est2}
|U_5|^2&\ll X^\eta L  \sum_{L<d\le 2L}\frac{D}{Q}
\sum_{|q|\leq Q}\bigg(1-\frac{|q|}{Q}\bigg)
\sum_{D_1<d\le D_2\atop{D_1<d+q\le D_2}}a(d+q)\overline{a(d)}e(f(d+q,l)-f(d,l))\nonumber\\
&\ll \Bigg(\frac{(LD)^2}{Q}+\frac{LD}{Q}\sum_{0<|q|\leq Q}
\sum_{D<d\le 2D\atop{D<d+q\le 2D}}\bigg|\sum_{L_1<l\le L_2}e\big(g_{d,q}(l)\big)\bigg|\Bigg)X^\eta,
\end{align}
where
\begin{equation}\label{maxmin2}
L_1=\max{\bigg\{L,\frac{X}{2d},\frac{X}{2(d+q)}\bigg\}},\quad
L_2=\min{\bigg\{2L,\frac{X}{d},\frac{X}{d+q}\bigg\}}
\end{equation}
and
\begin{equation}\label{gl}
g(l)=g_{d,q}(l)=f(d+q,l)-f(d,l).
\end{equation}
It is not hard to see that the sum over negative $q$ in formula \eqref{U52est2}
is equal to the sum over positive $q$. Thus
\begin{equation}\label{U52est3}
|U_5|^2\ll\Bigg(\frac{ (LD)^2}{Q}+\frac{LD}{Q}\sum_{1\leq q\leq Q}
\sum_{D<d\le 2D-q}\bigg|\sum_{L_1<l\le L_2}e(g_{d,q}(l))\bigg|\Bigg)X^\eta.
\end{equation}
Consider the function $g(l)$.
From \eqref{psit}, \eqref{fdl} and \eqref{gl} it follows
\begin{equation*}
g(l)=\int\limits_{d}^{d+q}f_t^\prime(t,l)\,dt=\int\limits_{d}^{d+q}l\psi^\prime(tl)\,dt.
\end{equation*}
Hence
\begin{equation}\label{g''lint1}
g^{\prime\prime}(l)=\int\limits_{d}^{d+q}2t\psi^{\prime\prime}(tl)
+lt^2\psi^{\prime\prime\prime}(tl)\,dt.
\end{equation}
Bearing in mind \eqref{psit} and \eqref{g''lint1} we obtain
\begin{equation}\label{g''lint2}
g^{\prime\prime}(l)=\int\limits_{d}^{d+q} \Big(\Psi_1(t, l)-\Psi_2(t, l)\Big)\,dt,
\end{equation}
where
\begin{equation}\label{Psi1Psi2}
\Psi_1(t, l)=\alpha c^2(c-1)t^{c-1}l^{c-2}, \quad
\Psi_2(t, l)=\frac{m}{8}t^{-1/2}l^{-3/2}.
\end{equation}
If $t\in[d, d+q]$, then
\begin{equation}\label{tl}
tl\asymp X.
\end{equation}
From \eqref{Psi1Psi2} and \eqref{tl} we get
\begin{equation}\label{Psi1Psi2est}
|\Psi_1(t, l)|\asymp |\alpha| d^2X^{c-2},
\quad |\Psi_2(t, l)|\asymp md^2X^{-3/2}.
\end{equation}
On the one hand from  \eqref{g''lint2} and \eqref{Psi1Psi2est} we conclude
that there exists sufficiently small constant $h_1>0$ such that if
$|\alpha|\leq h_1mX^{1/2-c}$, then $|g^{\prime\prime}(l)|\asymp qmdX^{-3/2}$.

On the other hand from  \eqref{g''lint2} and \eqref{Psi1Psi2est} it follows that
there exists sufficiently large constant $H_1>0$ such that if
$|\alpha|\geq H_1mX^{1/2-c}$, then $|g^{\prime\prime}(l)|\asymp q|\alpha| dX^{c-2}$.

Consider several cases.

\textbf{Case 1b.}
\begin{equation}\label{Case1b}
H_1mX^{1/2-c}\leq|\alpha|\leq P.
\end{equation}
We recall that the constant $H_1$ is chosen in such a way, that if $|\alpha|\geq H_1mX^{1/2-c}$,
then uniformly for $l\in(L_1, L_2]$ we have $|g^{\prime\prime}(l)|\asymp q|\alpha| dX^{c-2}$.
Using \eqref{ParU5}, \eqref{maxmin2} and applying Lemma \ref{Korput} for $k=2$ we find
\begin{align}\label{Case1best}
\sum_{L_1<l\le L_2}e(g(l))
 &\ll L\big(q|\alpha|dX^{c-2}\big)^{1/2}+\big(q|\alpha|dX^{c-2}\big)^{-1/2}\nonumber\\
&= Lq^{1/2}|\alpha|^{1/2}d^{1/2} X^{c/2-1}+q^{-1/2}|\alpha|^{-1/2} d^{-1/2}X^{1-c/2}.
\end{align}
From \eqref{ParU5}, \eqref{U52est3}, \eqref{Case1b} and \eqref{Case1best} it follows
\begin{equation}\label{U5est1}
U_5\ll \big(XQ^{-1/2}+P^{1/4}X^{\frac{2c+5}{8}}Q^{1/4}+m^{-1/4}XQ^{-1/4}\big)X^\eta.
\end{equation}

\textbf{Case 2b.}
\begin{equation}\label{Case2b}
h_1mX^{1/2-c}<\alpha< H_1mX^{1/2-c}.
\end{equation}
The formulas \eqref{g''lint2} and \eqref{Psi1Psi2} give us
\begin{equation}\label{g'''lint}
g^{\prime\prime\prime}(l)=\int\limits_{d}^{d+q} \Big(\Phi_1(t, l)+\Phi_2(t, l)\Big)\,dt,
\end{equation}
where
\begin{equation}\label{Phi1Phi2}
\Phi_1(t, l)=\alpha c^2(c-1)(c-2)t^{c-1}l^{c-3}, \quad
\Phi_2(t, l)=\frac{3m}{16}t^{-1/2}l^{-5/2}.
\end{equation}
From \eqref{g''lint2}, \eqref{Psi1Psi2}, \eqref{g'''lint}  and  \eqref{Phi1Phi2}  it follows
\begin{equation}\label{g''g'''}
(c-2)g^{\prime\prime}(l)-lg^{\prime\prime\prime}(l)
=\frac{7-2c}{16}m\int\limits_{d}^{d+q}t(tl)^{-3/2}\,dt.
\end{equation}
Using \eqref{tl} and \eqref{g''g'''} we obtain
\begin{equation*}
|(c-2)g^{\prime\prime}(l)-lg^{\prime\prime\prime}(l)|\asymp qmdX^{-3/2}.
\end{equation*}
Consequently there exists $\alpha_1 > 0$, such that for every
$l\in(L_1, L_2]$ at least one of the following inequalities holds:
\begin{equation}\label{g''est1}
|g^{\prime\prime}(l)|\geq \alpha_1 qmdX^{-3/2}.
\end{equation}
\begin{equation}\label{g'''est1}
|g^{\prime\prime\prime}(l)|\geq \alpha_1 qmd^2X^{-5/2}.
\end{equation}
Consider the equation
\begin{equation}\label{g'''equation1}
g^{\prime\prime\prime}(l)=0.
\end{equation}
From \eqref{g'''lint}  and  \eqref{Phi1Phi2} we get
\begin{equation}\label{g'''equation2}
\alpha c(c-1)(c-2)[(d+q)^c-d^c]l^{c-3}-\frac{3m}{8}[(d+q)^{1/2}-d^{1/2}]l^{-5/2}=0
\end{equation}
which is equivalent to
\begin{equation}\label{g'''equation3}
l^{c-1/2}=\frac{3m[(d+q)^{1/2}-d^{1/2}]}{8\alpha c(c-1)(c-2)[(d+q)^c-d^c]}.
\end{equation}
It is not hard to see that the equation \eqref{g'''equation3}
has at most 1 solution $Z\in(L^{c-1/2}_1, L^{c-1/2}_2]$.
Therefore the equation \eqref{g'''equation1}
has at most 1 solution in real numbers $l\in(L_1, L_2]$.
According to Rolle's Theorem if $C$ does not depend on $l$  then the equation
$g^{\prime\prime}(l)=C$ has at most 2 solution in real numbers $l\in(L_1, L_2]$.
Therefore the equation  $|g^{\prime\prime}(l)|=\alpha_1q md^2X^{-3/2}$
has at most 4 solution in real numbers $l\in(L_1, L_2]$.
From these consideration it follows that the interval $(L_1, L_2]$
can be divided into at most 5 intervals such that if $J$ is one of them,
then at least one of the following statements holds:
\begin{equation}\label{g''estJ}
\mbox{The inequality } \eqref{g''est1}  \mbox{ is fulfilled for all } l\in J.
\end{equation}
\begin{equation}\label{g'''estJ}
\mbox{The inequality } \eqref{g'''est1}  \mbox{ is fulfilled for all } l\in J.
\end{equation}
Using \eqref{g''lint2}, \eqref{tl}, \eqref{Psi1Psi2est}, \eqref{Case2b},
\eqref{g'''lint}  and  \eqref{Phi1Phi2} we find
\begin{equation}\label{g''g'''upper}
|g^{\prime\prime}(l)|\ll qmdX^{-3/2}, \quad
|g^{\prime\prime\prime}(l)|\ll qmd^2X^{-5/2}.
\end{equation}
From \eqref{g''estJ} -- \eqref{g''g'''upper}  it follows that
the interval $(L_1, L_2]$ can be divided into at most 5 intervals such that if
$J$ is one of them, then at least one of the following assertions is fulfilled:
\begin{equation}\label{g''est2}
|g^{\prime\prime}(l)|\asymp qmdX^{-3/2} \quad \mbox{ uniformly for } \quad l\in J.
\end{equation}
\begin{equation}\label{g'''est2}
|g^{\prime\prime\prime}(l)|\asymp qmd^2X^{-5/2} \quad  \mbox{ uniformly for } \quad l\in J.
\end{equation}
If \eqref{g''est2} is fulfilled, then we use Lemma \ref{Korput} for $k=2$ and get
\begin{align}\label{Case2best1}
\sum\limits_{l\in J}e(g(l))
 &\ll L\big(qmdX^{-3/2}\big)^{1/2}+\big(qmdX^{-3/2}\big)^{-1/2}\nonumber\\
&=Lq^{1/2} m^{1/2} d^{1/2} X^{-3/4}+q^{-1/2} m^{-1/2} d^{-1/2}X^{3/4}.
\end{align}
If \eqref{g'''est2}  holds, then we use Lemma \ref{Korput} for $k=3$ and obtain
\begin{align}\label{Case2best2}
\sum\limits_{l\in J}e(g(l))
 &\ll L\big(qmd^2X^{-5/2}\big)^{1/6}+L^{1/2} \big(qmd^2X^{-5/2} \big)^{-1/6}\nonumber\\
&=Lq^{1/6} m^{1/6} d^{1/3} X^{-5/12}+L^{1/2} q^{-1/6} m^{-1/6} d^{-1/3}X^{5/12}.
\end{align}
From \eqref{Case2best1} and \eqref{Case2best2} it follows
\begin{align}\label{Case2best3}
\sum\limits_{L_1<l\le L_2}e(g(l))
&\ll Lq^{1/2} m^{1/2} d^{1/2} X^{-3/4}+q^{-1/2} m^{-1/2} d^{-1/2}X^{3/4}\nonumber\\
&+Lq^{1/6} m^{1/6} d^{1/3} X^{-5/12}+L^{1/2} q^{-1/6} m^{-1/6} d^{-1/3}X^{5/12}.
\end{align}
Taking into account  \eqref{ParU5}, \eqref{U52est3} and \eqref{Case2best3} we find
\begin{align}\label{U5est2}
U_5\ll&\big( XQ^{-1/2} +m^{1/4}X^{3/4}Q^{1/4}+m^{-1/4}XQ^{-1/4}\nonumber\\
&+m^{1/12}X^{7/8}Q^{1/12}+m^{-1/12}XQ^{-1/12}\big)X^\eta.
\end{align}

\textbf{Case 3b.}
\begin{equation}\label{Case3b}
|\alpha|\leq h_1mX^{1/2-c}.
\end{equation}
We have chosen the constant $h_1$  in such a way, that from
\eqref{g''lint2}, \eqref{tl}, \eqref{Psi1Psi2est} and \eqref{Case3b}
it follows that $|g^{\prime\prime}(l)|\asymp qmdX^{-3/2}$ uniformly for $l\in(L_1, L_2]$.
Applying Lemma \ref{Korput} for $k=2$ we get
\begin{equation}\label{Case3best}
\sum\limits_{L_1<l\le L_2}e(g(l))
\ll Lq^{1/2} m^{1/2} d^{1/2} X^{-3/4}+q^{-1/2} m^{-1/2} d^{-1/2}X^{3/4}.
\end{equation}
From \eqref{U52est3} and \eqref{Case3best} we obtain
\begin{equation}\label{U5est3}
U_5\ll \big( XQ^{-1/2} +m^{1/4}X^{3/4}Q^{1/4}+m^{-1/4}XQ^{-1/4}\big)X^\eta.
\end{equation}

\textbf{Case 4b.}
\begin{equation}\label{Case4b}
-H_1mX^{1/2-c}<\alpha< -h_1mX^{1/2-c}.
\end{equation}
In this case $|g^{\prime\prime}(l)|\asymp qmdX^{-3/2}$.
Arguing in a similar way we find
\begin{equation}\label{U5est4}
U_5\ll \big( XQ^{-1/2} +m^{1/4}X^{3/4}Q^{1/4}+m^{-1/4}XQ^{-1/4}\big)X^\eta.
\end{equation}
From \eqref{U4U5}, \eqref{U5est1}, \eqref{U5est2}, \eqref{U5est3} and \eqref{U5est4}
we get
\begin{align}\label{U4estimation}
U_4\ll&\Big( XQ^{-1/2} +P^{1/4}X^{\frac{2c+5}{8}}Q^{1/4}+m^{1/4}X^{3/4}Q^{1/4}
+m^{-1/4}XQ^{-1/4}\nonumber\\
&+m^{1/12}X^{7/8}Q^{1/12}+m^{-1/12}XQ^{-1/12}\Big)X^\eta.
\end{align}
Arguing as in the estimation of $U_4$  we obtain
\begin{align}\label{U3estimation}
U_3\ll&\Big( XQ^{-1/2} +P^{1/4}X^{\frac{2c+5}{8}}Q^{1/4}+m^{1/4}X^{3/4}Q^{1/4}
+m^{-1/4}XQ^{-1/4}\nonumber\\
&+m^{1/12}X^{7/8}Q^{1/12}+m^{-1/12}XQ^{-1/12}\Big)X^\eta.
\end{align}
Summarizing \eqref{Ualphadecomp}, \eqref{U2estimation}, \eqref{U1estimation},
\eqref{U4estimation} and \eqref{U3estimation} we conclude that
for $|\alpha|\leq P$ and any integer $m\neq0$ the estimation
\begin{align}\label{Ualphaestimation}
|U(\alpha, m)|\ll&\Big( m^{1/2}X^{7/12} + m^{1/6}X^{3/4} + m^{-1/6}X^{11/12}+XQ^{-1/2} \nonumber\\
&+P^{1/4}X^{\frac{2c+5}{8}}Q^{1/4}+m^{1/4}X^{3/4}Q^{1/4}+m^{-1/4}XQ^{-1/4}\nonumber\\
&+m^{1/12}X^{7/8}Q^{1/12}+m^{-1/12}XQ^{-1/12}\Big)X^\eta
\end{align}
holds.

We substitute the expression \eqref{Ualphaestimation} for $U(\alpha, m)$ in
\eqref{Valphaest1} and find
\begin{align}\label{Valphaest2}
\max\limits_{|\alpha|\leq P}|V(\alpha)|\ll&\Big( M^{1/2}X^{7/12} + M^{1/6}X^{3/4} + X^{11/12}+XQ^{-1/2} \nonumber\\
&+P^{1/4}X^{\frac{2c+5}{8}}Q^{1/4}+M^{1/4}X^{3/4}Q^{1/4}+XQ^{-1/4}\nonumber\\
&+M^{1/12}X^{7/8}Q^{1/12}+XQ^{-1/12}\Big)X^\eta.
\end{align}
We choose
\begin{equation}\label{Qchoose}
Q=\big[P^{-3/4}X^{\frac{9-6c}{8}}\big].
\end{equation}
The direct verification assures us that the condition \eqref{QD} is fulfilled.

Bearing in mind \eqref{Valphaest2} and  \eqref{Qchoose}
we obtain the estimation \eqref{Valphaestimation}.
\end{proof}

\section{Proof of the Theorem}
\indent

Consider the sum
\begin{equation}\label{Gamma}
\Gamma(X)= \sum\limits_{X/2<p_1,p_2,p_3\leq X\atop{|p_1^c+p_2^c+p_3^c-N|<\varepsilon
\atop{\|\sqrt{p_i}\|<Y,\,i=1,2,3}}}\log p_1\log p_2\log p_3\,.
\end{equation}
The theorem will be proved if we show that $\Gamma(X)\rightarrow\infty$ as $X\rightarrow\infty$.

Consider the integrals
\begin{align}
\label{I1def}
&I_1=\int\limits_{-\infty}^\infty H^3(\alpha) e(-N\alpha) \hat{A}(\varepsilon\alpha)\, d\alpha\\
\label{Idef}
&I=\int\limits_{-\infty}^\infty S^3(\alpha) e(-N\alpha) \hat{A}(\varepsilon\alpha)\, d\alpha.
\end{align}
On the one hand from \eqref{chit}, \eqref{Halpha}, \eqref{Achi},
\eqref{Gamma} and \eqref{I1def}  it follows
\begin{align}\label{I1est1}
I_1&=  \sum\limits_{X/2<p_1,p_2,p_3\leq X}  \prod_{j=1}^{3} \chi(\sqrt{p_j})\log p_j
\int\limits_{-\infty}^\infty e((p_1^c+p_2^c+p_3^c-N)\alpha)\hat{A}(\varepsilon\alpha)\,d\alpha\nonumber\\
&=  \sum\limits_{X/2<p_1,p_2,p_3\leq X}  \prod_{j=1}^{3} \chi(\sqrt{p_j})(\log p_j)\varepsilon^{-1}
A((p_1^c+p_2^c+p_3^c-N)\varepsilon^{-1})\leq\varepsilon^{-1}\Gamma(X).
\end{align}
On the other hand \eqref{Salpha}, \eqref{Fourierseries},
\eqref{Halpha}, \eqref{Valpha}, \eqref{hatA0}, \eqref{I1def} and \eqref{Idef} give us
\begin{align}\label{I1est2}
I_1&= \int\limits_{-\infty}^\infty \bigg(\frac{9}{5}YS(\alpha) +V(\alpha) \bigg)^3
e(-N\alpha)\hat{A}(\varepsilon\alpha)\,d\alpha\nonumber\\
&=  \bigg(\frac{9}{5}Y\bigg)^3I
+\mathcal{O}\Bigg(Y^2\int\limits_{-P}^P |S^2(\alpha) V(\alpha)| \,d\alpha  \Bigg)\nonumber\\
&+\mathcal{O}\Bigg(Y\int\limits_{-P}^P |S(\alpha) V^2(\alpha)| \,d\alpha  \Bigg)
+\mathcal{O}\Bigg(\int\limits_{-P}^P |V^3(\alpha)| \,d\alpha  \Bigg).
\end{align}
We write
\begin{equation}\label{Firstint}
\int\limits_{-P}^P |S^2(\alpha) V(\alpha)| \,d\alpha
\ll\max\limits_{|\alpha|\leq P} |V(\alpha)|\int\limits_{-P}^P |S(\alpha)|^2\,d\alpha.
\end{equation}
Applaying Cauchy's inequality we get
\begin{equation}\label{Secondint}
\int\limits_{-P}^P |S(\alpha) V^2(\alpha)| \,d\alpha
\ll\max\limits_{|\alpha|\leq P} |V(\alpha)|
\left(\int\limits_{-P}^P |S(\alpha)|^2\,d\alpha\right)^{1/2}
\left(\int\limits_{-P}^P |V(\alpha)|^2\,d\alpha\right)^{1/2}.
\end{equation}
Similarly
\begin{equation}\label{Thirdint}
\int\limits_{-P}^P |V(\alpha)|^3\,d\alpha
\ll\max\limits_{|\alpha|\leq P} |V(\alpha)|\int\limits_{-P}^P |V(\alpha)|^2\,d\alpha.
\end{equation}
Using Lemmas \ref{IntSalpha}, \ref{IntValpha},  \ref{Valphaest}
and \eqref{I1est2} -- \eqref{Thirdint}  we obtain
\begin{align}\label{I1est3}
I_1=\bigg(\frac{9}{5}Y\bigg)^3I  &+\mathcal{O} \Big( \big(PM^{1/2}X^{19/12}
+ PM^{1/6} X^{7/4}+PX^{23/12}+P^{17/16}X^{\frac{2c+61}{32}}\nonumber\\
&+P^{13/16}M^{1/4}X^{\frac{65-6c}{32}}+P^{15/16}M^{1/12}X^{\frac{63-2c}{32}}\big)X^\eta\Big).
\end{align}
From  \eqref{varepsilon}, \eqref{Y}, \eqref{Delta}, \eqref{M}, \eqref{P},
\eqref{Idef}, \eqref{I1est3}, Lemma \ref{Lowerbound}
and  choosing   $\eta<\delta$  we find
\begin{equation}\label{I1est4}
I_1\gg Y^3X^{3-c}.
\end{equation}
Finally \eqref{I1est1} and \eqref{I1est4} give us
\begin{equation}\label{Gammaest}
\Gamma(X)\gg\varepsilon Y^3X^{3-c}.
\end{equation}
Bearing in mind \eqref{varepsilon}, \eqref{Y} and \eqref{Gammaest}
we establish that $\Gamma(X)\rightarrow\infty$ as $X\rightarrow\infty$.

The proof of the Theorem \ref{MyTheorem} is complete.

\vskip20pt
\footnotesize
\begin{flushleft}
S. I. Dimitrov\\
Faculty of Applied Mathematics and Informatics\\
Technical University of Sofia \\
8, St.Kliment Ohridski Blvd. \\
1756 Sofia, BULGARIA\\
e-mail: sdimitrov@tu-sofia.bg\\
\end{flushleft}

\end{document}